\newtheorem{theorem}{Theorem}[section]
\newtheorem{lemma}[theorem]{Lemma}
\newtheorem{note}[theorem]{Note}
\newtheorem{prop}[theorem]{Proposition}
\newtheorem{cor}[theorem]{Corollary}
\newtheorem*{Theorem1'}{Theorem 1'}
\theoremstyle{definition}
\newtheorem{definition}[theorem]{Definition}
\theoremstyle{remark}
\numberwithin{equation}{section}
\newcommand \C{{\mathbb C}}
\newcommand \B{{\mathcal B}}
\newcommand \End{{\mathrm {End}}}
\newcommand \Sp{{\mathrm {Sp}}}
\newcommand \Un{{\mathrm {U}}}
\newcommand \Hom{{\mathrm {Hom}}}
\newcommand \dm{{\mathrm {dim}}}
\newcommand \chr{{\mathrm {char}}}
\newcommand \GL{{\mathrm {GL}}}
\newcommand \al{{\alpha}}
\newcommand \ga{{\gamma}}
\newcommand \lm{{\lambda}}
\newcommand \ind{{\mathrm {ind}}}
\newcommand \res{{\mathrm {res}}}
\newcommand \U{{\mathcal U}}
\begin{document}

\title [Representations of symplectic and unitary groups] {\small{Irreducible representations of unipotent subgroups
of symplectic and unitary groups defined over rings}}

\author{Fernando Szechtman}
\address{Department of Mathematics and Statistics, University of Regina}
\email{fernando.szechtman@gmail.com}
\thanks{The author was supported in part by an NSERC discovery grant}

\subjclass[2010]{20C15, 20C20}

\keywords{unipotent group; symplectic group; unitary group}

\begin{abstract} Let $A$ be a ring with $1\neq 0$, not necessarily finite, endowed with an involution~$*$,
that is, an anti-automorphism of order $\leq 2$. Let $H_n(A)$ be
the additive group of all $n\times n$ hermitian matrices over $A$
relative to $*$. Let $\U_n(A)$ be the subgroup of $\GL_n(A)$ of
all upper triangular matrices with 1's along the main diagonal.
Let $P=H_n(A)\rtimes \U_n(A)$, where $\U_n(A)$ acts on $H_n(A)$ by
$*$-congruence transformations. We may view $P$ as a unipotent
subgroup of either a symplectic group $\Sp_{2n}(A)$, if $*=1_A$
(in which case $A$ is commutative), or a unitary group
$\Un_{2n}(A)$ if $*\neq 1_A$. In this paper we
construct and classify a family of irreducible representations of
$P$ over a field $F$ that is essentially arbitrary. In particular,
when $A$ is finite and $F=\C$ we obtain irreducible
representations of $P$ of the highest possible degree.

\end{abstract}

\maketitle

\section{Introduction}

Given a finite field $F_q$ of characteristic $p$, let $\U_n(q)$ stand for the Sylow $p$-subgroup of $\GL_n(q)$
consisting of all upper triangular matrices with 1's along the main diagonal. Associated to each triple $(i,j,\lm)$,
where $1\leq i<j\leq n$ and $\lm:F_q^+\to \C^*$ is a non-trivial linear character, there is an irreducible character
$\chi_{(i,j,\lm)}$ of $\U_n(q)$ of degree $q^{j-i-1}$, as constructed by Lehrer \cite{L}. In fact, Lehrer showed that
as long as
\begin{equation}
\label{sec}
i_1<i_2<\cdots<i_r,\quad j_1>j_2>\cdots>j_r
\end{equation}
and $\lm_1,\dots,\lm_r:F_q^+\to \C^*$ are non-trivial linear characters, then the product
\begin{equation}
\label{bas}
\chi_{(i_1,j_1,\lm_1)}\chi_{(i_2,j_2,\lm_2)}\cdots \chi_{(i_r,j_r,\lm_r)}
\end{equation}
is also irreducible. In particular, by taking the sequences (\ref{sec}) to be
$$
1<2<\cdots,\quad n>n-1>\cdots
$$
Lehrer obtained irreducible characters of $\U_n(q)$ of the highest possible degree, namely $q^{(n-2)+(n-4)+\cdots}$.

Lehrer's results were extended by Andr\'{e} in a series of papers begun in 1995. In \cite{A}, under the assumption that $p\geq n$, he showed
that every non-trivial irreducible character of $\U_n(q)$ is a constituent of one and only one character of the form (\ref{bas}), where none of the $i_1,\dots,i_r$ (resp. $j_1,\dots,j_r$) are repeated. Moreover, he identified amongst these {\em all} irreducible
characters of $\U_n(q)$ of the highest possible degree. The condition that $p\geq n$ was later removed in \cite{A2}.

Analogous results for Sylow $p$-subgroups of symplectic and
orthogonal groups over $F_q$ were later obtained by Andr\'{e} and
Neto \cite{AN,AN2} provided $p$ is odd. These results have been
recently generalized by  Andr\'{e}, Freitas and Neto \cite{AFN} as
well as by Andrews \cite{As} to algebra groups with involution in
the context of the supercharacter theory developed by Diaconis and
Isaacs \cite{DI}.

What concerns us here is the construction and classification of a family of irreducible modules of unipotent subgroups of symplectic and unitary groups defined over a fairly general type of ring~$A$, not necessarily finite or commutative.
The field~$F$ underlying our representations is essentially arbitrary.


It is worth noting that our modules will be finite dimensional over $F$ if and only if $A$ itself is finite. However, this fact
will play no role whatsoever in our arguments. The flexibility of being able to deal with finite and infinite dimensional modules
on equal terms is based on \cite{S}, which develops a Clifford theory for modules of arbitrary dimensionality. 

Let $A$ be a ring (all our rings are supposed to have a nonzero identity), endowed with an involution~$*$. Throughout this paper,
this means that $*$ is an anti-automorphism of $A$ of order $\leq 2$. Let $H_n(A)$ stand for the additive group of all $n\times n$ hermitian matrices, relative to $*$, over $A$, and set $P=H_n(A)\rtimes \U_n(A)$, where $\U_n(A)$ acts on $H_n(A)$ by
$*$-congruence transformations. Then $P$ is a unipotent subgroup of a unitary group $\Un_{2n}(A)$ if $*\neq 1_A$
or a symplectic group $\Sp_{2n}(A)$ if $*=1_A$, in which case $A$ is necessarily commutative. See \S\ref{int} for details.

Let $F$ be a field and let $\lm:A^+\to F^*$ be a linear character. We will refer to $\lm$ as left admissible if the kernel of the associated linear character $\lm^{\sharp}:A^+\to F^*$, given by $\lm^{\sharp}(\al)=\lm(\al+\al^*)$, contains no left ideals but $(0)$. 

Our main result can be stated as follows. A more precise formulation can be found in \S\ref{t1} and~\S\ref{t2}.

\begin{theorem}\label{maco} (a) Associated to every $1\leq i\leq n$ and every left admissible linear character $\lm: A^+\to F^*$ there is an irreducible
$P$-module $V_{i,\lm}$ of dimension $|A|^{n-i}$ over $F$.

(b) Let $D$ be any non-empty subset of $\{1,\dots,n\}$ and let $\lm$ be a choice function that assigns to each
$i\in D$ a left admissible linear character $\lm_i:A^+\to F^*$.
Then
$$
V(D,\lm)=\underset{i\in D}\bigotimes V_{i,\lm_i}
$$
is a monomial irreducible $P$-module.

(c) $V(D,\lm)\cong V(D',\lm')$ if and only if $D=D'$ and $\lm_i|_R=\lm'_i|_R$ for all $i\in D$, where $R=\{r\in A\,|\, r^*=r\}$.
\end{theorem}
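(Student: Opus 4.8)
The plan is to realise every module in the theorem as a monomially induced module from the abelian normal subgroup $N:=H_n(A)$ of $P$, and to read off all three assertions from Clifford theory relative to $N$ in the form developed in \cite{S}, which applies verbatim to modules of arbitrary dimension over an essentially arbitrary field. Write the $*$-congruence action as $g\cdot X=gXg^{*}$. For a single index $i$ and a left admissible $\lm$, consider the linear character $\psi_i\colon N\to F^{*}$, $\psi_i(X)=\lm(X_{ii})$ (legitimate since $X_{ii}\in R$). A one-line computation shows that the subgroup $U_i\subseteq\U_n(A)$ of matrices whose $i$-th row is trivial fixes $\psi_i$, because $(gXg^{*})_{ii}=X_{ii}$ as soon as the $i$-th row of $g$ is $e_i^{T}$. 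Conversely, if $g$ fixes $\psi_i$, then feeding $\psi_i$ the hermitian matrices supported on a single pair of off-diagonal positions $(i,k),(k,i)$ forces $A\,(g_{ik})^{*}\subseteq\ker\lm^{\sharp}$, a left ideal, so left admissibility gives $g_{ik}=0$. Hence $\Stab_P(\psi_i)=P_i:=N\rtimes U_i$, of index $|A|^{\,n-i}$. Since $\psi_i$ is $U_i$-invariant it extends, trivially on $U_i$, to a linear character $\widetilde{\psi_i}$ of $P_i$, and I put $V_{i,\lm}:=\mathrm{Ind}_{P_i}^{P}\widetilde{\psi_i}$, of dimension $|A|^{\,n-i}$. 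As $P_i$ is the full stabiliser of $\psi_i$ and $\widetilde{\psi_i}$ is a one-dimensional module of $P_i$ lying over $\psi_i$, the Mackey irreducibility criterion of \cite{S} makes $V_{i,\lm}$ irreducible. This settles (a).

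For (b) I would set $\psi_D(X)=\prod_{i\in D}\lm_i(X_{ii})$, a linear character of $N$, and prove $\Stab_P(\psi_D)=P_D:=\bigcap_{i\in D}P_i$, of index $|A|^{\,\sum_{i\in D}(n-i)}$. The argument is the one above carried out inductively from the smallest index of $D$ upward: when one probes the positions $(i_0,k),(k,i_0)$ for the current smallest unprocessed $i_0\in D$, the contributions of the already-eliminated smaller rows of $g$ have vanished, so one is again reduced to $A\,(g_{i_0k})^{*}\subseteq\ker\lm_{i_0}^{\sharp}$ and left admissibility closes the step. Extending $\psi_D$ trivially to a linear character $\widetilde{\psi_D}$ of $P_D$, the module $\mathrm{Ind}_{P_D}^{P}\widetilde{\psi_D}$ is irreducible by \cite{S} and monomial by construction, so it remains to identify it with $\bigotimes_{i\in D}V_{i,\lm_i}$. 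An elementary matrix factorisation gives $P=P_i\cdot\bigcap_{j\in D\setminus\{i\}}P_j$ for each $i\in D$, so the identity $\mathrm{Ind}_{H_1}^{G}\theta_1\otimes\mathrm{Ind}_{H_2}^{G}\theta_2\cong\mathrm{Ind}_{H_1\cap H_2}^{G}(\theta_1\theta_2)$, valid whenever $G=H_1H_2$ and available in the generality of \cite{S} via the projection formula, Mackey, and transitivity of induction, applies; iterating it over the elements of $D$, and using $\widetilde{\psi_i}|_{P_D}\cdot\widetilde{\psi_{D\setminus\{i\}}}|_{P_D}=\widetilde{\psi_D}$, yields $\bigotimes_{i\in D}V_{i,\lm_i}\cong\mathrm{Ind}_{P_D}^{P}\widetilde{\psi_D}$. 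This gives (b).

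For (c), the ``if'' direction is immediate: if $D=D'$ and $\lm_i|_R=\lm'_i|_R$ for all $i\in D$, then $\psi_D=\psi_{D'}$ because each $X_{ii}\in R$, so $V(D,\lm)=V(D',\lm')$ by (b). For ``only if'', restrict an isomorphism $V(D,\lm)\cong V(D',\lm')$ to $N$: by Clifford theory both restrictions are the sum of the characters in the $P$-orbit of $\psi_D$, respectively of $\psi_{D'}$, so these two orbits coincide. Now the chain $0=N_{\le 0}\subset N_{\le 1}\subset\cdots\subset N_{\le n}=N$, where $N_{\le m}$ is the group of hermitian matrices supported in the top-left $m\times m$ corner, is $P$-invariant (an upper-triangular $g$ cannot move a top-left corner outside itself), so restriction $\widehat N\to\widehat{N_{\le m}}$ is $P$-equivariant and carries the orbit of $\psi_D$ onto that of $\psi_D|_{N_{\le m}}$, whose size is $|A|^{\,f_D(m)}$ with $f_D(m)=\sum_{i\in D,\ i\le m}(m-i)$ by the stabiliser computation of (b) restricted to indices $\le m$ (which uses that $\lm_i|_R\neq 1$, itself a consequence of admissibility). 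These sizes are isomorphism invariants, so $f_D=f_{D'}$, and since the second differences of $m\mapsto f_D(m)$ recover the indicator function of $D$, this gives $D=D'$. (For infinite $A$ one runs the same argument at the level of orbits rather than cardinalities, recovering $\min D$ together with $\lm_{\min D}|_R$ from the fact that $\psi_D|_{N_{\le \min D}}$ is $P$-fixed and equals $\lm_{\min D}|_R$ pulled back along the equivariant projection $N_{\le\min D}\to R$ onto the corner entry, and then inducting.) Finally, choose $g\in P$ with $\psi_{D'}={}^{g}\psi_D$; evaluating this at the hermitian matrix supported on an off-diagonal pair $(m_i,m_j)$ with $m_i<m_j$ in $D$ yields $\lm_{m_i}^{\sharp}\big(a\,(g^{-1})_{m_im_j}^{*}\big)=1$ for all $a\in A$, so $A\,(g^{-1})_{m_im_j}^{*}\subseteq\ker\lm_{m_i}^{\sharp}$ and left admissibility forces $(g^{-1})_{m_im_j}=0$ (working through these pairs in order of increasing $m_i$, so that the terms indexed by smaller elements of $D$ are already killed). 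With all these entries gone, evaluating $\psi_{D'}={}^{g}\psi_D$ at diagonal matrices supported on $D$ gives $\prod_{i\in D}\lm'_i(x_i)=\prod_{i\in D}\lm_i(x_i)$ for all $x_i\in R$ ($i\in D$), whence $\lm_i|_R=\lm'_i|_R$ for every $i\in D$.

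The hard part throughout is the combinatorial bookkeeping in the three stabiliser-type computations ($\Stab_P(\psi_i)=P_i$, $\Stab_P(\psi_D)=P_D$, and the vanishing of the off-diagonal entries of $g$ in (c)): each works only because the upper-triangular shape lets one process rows in increasing order, so that by the time a given position is probed the troublesome terms from smaller row indices have already been eliminated, and at each such step the lone hypothesis ``$\lm$ left admissible'' — no nonzero left ideal inside $\ker\lm^{\sharp}$ — is exactly what is needed. Once these are in place, the Clifford-theoretic scaffolding and the tensor-product-of-inductions identity are formal, given \cite{S}.
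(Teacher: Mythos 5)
Your proposal is correct and arrives at the same modules, but it takes a genuinely different route through the Clifford-theoretic scaffolding. The paper never computes the joint stabilizer $\Stab_P(\psi_D)$. Instead it introduces the ascending chain of normal subgroups $N_i=C_i\rtimes L_i$, proves only the \emph{single}-index stabilizer statement (Proposition \ref{po}, Corollary \ref{po2}, giving your $\Stab_P(\psi_i)=P_i$), and then obtains irreducibility of $\bigotimes_{i\in D}V_{i,\lm_i}$ by a Gallagher-type tensor argument (Theorem \ref{cli4} applied inductively in Theorem \ref{51}), using the key fact that $N_i$ acts trivially on $V_{j,\lm_j}$ for $j>i$ (Proposition \ref{uy}). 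Monomiality is then a separate step (Theorem \ref{monom}) via Mackey's tensor-product theorem, exactly as in your iteration of $\Ind_{H_1}^G\theta_1\otimes\Ind_{H_2}^G\theta_2\cong\Ind_{H_1\cap H_2}^G(\theta_1\theta_2)$ when $G=H_1H_2$. Your approach fuses these two steps by proving $\Stab_P(\psi_D)=P_D$ directly, which requires the inductive row-by-row computation (processing $D$ from smallest to largest so that terms from already-processed rows vanish before probing $(i_0,k),(k,i_0)$); that computation is more delicate than anything in the paper, but it is correct — the crucial move, which you do make, is to observe $\lambda^{\sharp}(\alpha)=\lambda^{\sharp}(\alpha^*)$ so that the relation one extracts is that a \emph{left} ideal $A\,(Y^{-1})_{ik}^*$ lies in $\ker\lambda^{\sharp}$, which left admissibility then kills. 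For part (c), the paper argues structurally: $\min D=\min D'$ because otherwise $N_{\min D}^0$ would act trivially on one side but not the other (Proposition \ref{uy}), then matches $\chi_{v_i,\lm_i}$ with $\chi_{v_i,\lm'_i}$ and strips off a factor via Lemma \ref{formol}. Your orbit-size argument with the $P$-invariant corner filtration $N_{\le m}$ and second differences of $f_D$ is a nice alternative for finite $A$; for infinite $A$ your sketch (recovering $\min D$ from the $P$-fixed nontrivial restriction, ``then inducting'') is considerably vaguer than the paper's and omits the stripping-off step that Lemma \ref{formol} supplies. One further point worth noting: the paper's Theorem \ref{53} carries an extra hypothesis (surjectivity of $\Lambda:a\mapsto a+a^*$ onto $R$) not present in Theorem \ref{maco}(c); your diagonal probing with arbitrary entries $x_i\in R$, rather than only elements of $\Lambda(A)$, sidesteps that hypothesis, which is a genuine (if minor) improvement over the published argument. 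Both approaches ultimately rest on the same two engines — left admissibility killing left ideals, and the triangular shape letting one eliminate row by row — but the paper's use of Gallagher makes the irreducibility of the tensor product ``free of calculations,'' whereas yours trades that conceptual transparency for a single, sharper stabilizer computation.
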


We remark that if $D=\{1,\dots,n\}$, $A$ is finite and $F=\C$, then $V(D,\lm)$ is an irreducible $P$-module of the highest possible degree, namely $|A|^{n(n-1)/2}$.

The proof of Theorem \ref{maco} requires versions of Clifford's theory, Gallagher's theorem and Mackey's tensor product theorem
that work equally well for finite and infinite dimensional modules. Such tools are expounded in detail in \cite{S},
a brief summary of which appears in \S\ref{clio}. Our use of Gallagher's theorem makes the irreducibility of $V(D,\lm)$ conceptually transparent and free of calculations.

Let us briefly discuss the conditions imposed on $A$ and $F$. Assume first that $*=1_A$ and $2\in U(A)$, the unit group of $A$.
Then a linear character~$\lm:A^+\to F^*$ is left admissible if and only if $\ker\lm$ contains no left ideals of $A$ but $(0)$.
When $A$ is finite and $F=\C$ the latter condition has been extensively studied. We refer the reader to \cite{CG}, \cite{H}, \cite{La}, \cite{W} for examples and details. For the case when $A$ is not necessarily finite and $F$ need not be~$\C$ see \cite{SHI}, which studies irreducible modules of McLain groups defined over rings admitting such linear characters. In terms of applications, perhaps the prime example is the following. Let $K$ be a non-archimidean local field with ring of
integers~${\mathcal O}$, maximal ideal ${\mathfrak p}$ and residue
field $F_q={\mathcal O}/{\mathfrak p}$ of odd characteristic
$p$. Then $A={\mathcal O}/{\mathfrak p}^m$ is a finite, principal,
local, commutative ring of size $q^m$ affording an admissible linear character $A^+\to F^*$
when $F$ has a root of unity of order $p^m$ if
$\chr(K)=0$ and $p$ if $\chr(K)=p$. Suppose next $*\neq 1_A$. We show in \S\ref{t2} that if $B$ is a local ring admitting
a linear character~$B^+\to F^*$ whose kernel contains no left ideals of $B$ but $(0)$ (there is an abundance of such $B$ as indicated
above) one can easily construct a quadratic extension $A$ of $B$ with an involution of order 2 and a left admissible linear character $A^+\to F^*$. An important special case is $B={\mathcal O}/{\mathfrak p}^m$, in the above notation, where $p$ is now allowed to be even.

\section{Clifford theory}\label{clio}

We fix a field $F$ for the remainder of the paper.
Let $N\unlhd G$ be groups and let $W$ be an $N$-module. For $g\in
G$, consider the $N$-module~$W^g$, whose underlying $F$-vector
space is $W$, acted upon by $N$ as follows:
$$
x\cdot w= (g x g^{-1})w,\quad  x\in N,w\in W.
$$
Then
$$
I_G(W)=\{g\in G\,|\, W^g\cong W\}
$$
is a subgroup of $G$, containing $N$, called the inertia group of $W$ (cf.
\cite[Lemma 3.1]{S}.

For $G$-modules $X$ and $Y$ we define
$$
(X,Y)_G=\dm_F\Hom_{G}(X,Y).
$$

\begin{theorem}\label{cl2} Let $N\unlhd G$ be groups and
let $W$ be an irreducible $N$-module with inertia group $T$.
Suppose $S$ is an irreducible $T$-module lying over $W$. Then
$\ind_T^G S$ is an irreducible $G$-module.

In particular, if $I_G(W)=N$ then $V=\ind_N^G W$ is irreducible
and if, in addition, $(W,W)_N=1$ then $(V,V)_G=1$ as well.
\end{theorem}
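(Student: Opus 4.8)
The plan is to prove the first assertion directly, combining Clifford's theorem with the internal structure of the induced module $V=\ind_T^G S$, and then to obtain the last clause by a short $\Hom$-computation. What is really being proved is the Clifford correspondence for modules of arbitrary dimension, so throughout I will freely use the dimension-free forms of Clifford's theorem and of the homogeneous (isotypic) decomposition of a restricted semisimple module supplied by \cite{S}, together with the tensor--hom adjunction $\Hom_G(\ind_T^G X,Y)\cong\Hom_T(X,\res_T Y)$, which holds with no finiteness hypothesis.

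First I would pin down $\res_N V$. Picking representatives for the cosets in $G/T$, write $V=F[G]\otimes_{F[T]}S=\bigoplus_{gT\in G/T}g\otimes S$. Since $N\unlhd G$ and $N\subseteq T$, we have $N=gNg^{-1}\subseteq gTg^{-1}$, so each $g\otimes S$ is an $N$-submodule of $\res_N V$, namely the $g$-conjugate of $\res_N S$. As $S$ lies over $W$ and $T=I_G(W)$, Clifford's theorem applied to the $T$-module $S$ shows $\res_N S$ is semisimple with all its homogeneous components $T$-conjugate to $W$, hence $\res_N S$ is $W$-isotypic; therefore $g\otimes S$ is isotypic of the type ${}^{g}W$, and ${}^{g_1}W\cong{}^{g_2}W$ precisely when $g_2^{-1}g_1\in I_G(W)=T$, i.e. when $g_1T=g_2T$. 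Thus $\res_N V=\bigoplus_{gT\in G/T}(g\otimes S)$ is exactly the decomposition of $\res_N V$ into its homogeneous $N$-components, these are permuted by $G$, and $\Stab_G(e\otimes S)=T$.

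With this in hand, irreducibility of $V$ is immediate. Let $U$ be a nonzero $G$-submodule of $V$. Then $\res_N U$ is semisimple, being a submodule of $\res_N V=\bigoplus_{gT}(g\otimes S)$, a direct sum of conjugates of the semisimple module $\res_N S$; hence every simple $N$-submodule of $U$ sits inside one of the components $g\otimes S$, and so $U=\bigoplus_{gT}(U\cap(g\otimes S))$. Choose $g$ with $U\cap(g\otimes S)\neq 0$; applying $g^{-1}$ and using $g^{-1}U=U$ gives $0\neq U\cap(e\otimes S)$, an $F[T]$-submodule of the irreducible $T$-module $e\otimes S\cong S$, whence $e\otimes S\subseteq U$ and finally $U\supseteq F[G](e\otimes S)=V$. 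Taking $T=N$ (so that $S$, being an irreducible $N$-module over $W$, is $W$ itself), this already gives that $\ind_N^G W$ is irreducible whenever $I_G(W)=N$.

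For the remaining clause, assume moreover $T=N$ and $(W,W)_N=1$. The adjunction gives $(V,V)_G=(W,\res_N V)_N$, and by the second paragraph $\res_N V$ is the direct sum, over the cosets in $G/N$, of the conjugate modules ${}^{g}W$, each irreducible over $N$, the coset $N$ contributing $W$ itself. By Schur's lemma $(W,{}^{g}W)_N=0$ unless $g\in I_G(W)=N$, while $(W,W)_N=1$ by hypothesis, so $(V,V)_G=1$. I expect the only genuine obstacle here to be foundational rather than combinatorial: one must be sure that Clifford's theorem, the semisimplicity of submodules of $\res_N V$, and the homogeneous decomposition remain available when the modules are infinite dimensional over $F$. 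These are precisely the tools imported from \cite{S} and recalled in this section, and once they are granted the argument above goes through unchanged.
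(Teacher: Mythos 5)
Your proof is correct and takes essentially the same approach as the paper. The paper simply cites \cite[Theorem 3.5]{S} for the first assertion, and your computation of $(V,V)_G$ via Frobenius reciprocity together with the homogeneous $N$-decomposition of $\res_N V$ is exactly the content of the paper's one-line chain $\End_N(W)=\Hom_N(W,V)\cong_F\End_G(V)$.
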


\begin{proof} See \cite[Theorem 3.5]{S} for the first assertion. As for
the second, by Frobenius reciprocity (cf. \cite[Theorem 5.3]{S}),
$\End_N(W)=\Hom_N(W,V)\cong_F\End_G(V)$.
\end{proof}

\begin{lemma}\label{formol} Let $N\unlhd G$ be groups and let $W$ be a $G$-module
with $\res_N^G W$ irreducible and $(W,W)_N=1$. Let $U_1,U_2$ be
$G$-modules acted upon trivially by $N$ and suppose
$T\in\Hom_G(U_1\otimes W,U_2\otimes W)$. Then $T=S\otimes 1$,
where $S\in\Hom_{G}(U_1,U_2)$.
\end{lemma}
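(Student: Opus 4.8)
The plan is to exploit the fact that $W$ is an irreducible $N$-module with $\End_N(W)=F$, so that a homomorphism out of $U_1\otimes W$ is forced, on the $W$-factor, to be multiplication by a scalar. Concretely, fix $F$-bases $\{u^{(1)}_a\}$ of $U_1$ and $\{u^{(2)}_b\}$ of $U_2$, and write an arbitrary $F$-linear map $T\colon U_1\otimes W\to U_2\otimes W$ in the form $T(u^{(1)}_a\otimes w)=\sum_b u^{(2)}_b\otimes T_{ba}(w)$ for uniquely determined $F$-linear maps $T_{ba}\colon W\to W$ (only finitely many nonzero for each $a$). Since $N$ acts trivially on $U_1$ and $U_2$, the condition that $T$ commute with the $N$-action says precisely that each $T_{ba}$ lies in $\End_N(W)$. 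By hypothesis $(W,W)_N=1$, so $\End_N(W)=F\cdot 1_W$, whence $T_{ba}=s_{ba}\,1_W$ for scalars $s_{ba}\in F$. Defining $S\colon U_1\to U_2$ by $S(u^{(1)}_a)=\sum_b s_{ba}u^{(2)}_b$, we get $T=S\otimes 1_W$ as $F$-linear maps.

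The second step is to upgrade this to a statement about $G$-maps, i.e. to check that $S\in\Hom_G(U_1,U_2)$. This follows by feeding back the remaining symmetry: $T$ is assumed $G$-equivariant, not merely $N$-equivariant, so for $g\in G$ and $u\in U_1$, $w\in W$ we have $T(gu\otimes gw)=g\,T(u\otimes w)$. Substituting $T=S\otimes 1_W$ gives $Su\otimes gw = gSu\otimes gw$ compared against $S(gu)\otimes gw$; choosing any nonzero $w$ (and using that $g$ acts invertibly on $W$, so $gw\neq 0$, together with the fact that $x\otimes v\mapsto x$ for fixed nonzero $v$ is injective on $U_2\otimes W$ — indeed $U_2\otimes W\cong\bigoplus_{\text{basis of }W}U_2$) forces $S(gu)=gS(u)$. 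Hence $S$ intertwines the $G$-actions on $U_1$ and $U_2$, as required.

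The mild technical point one must be careful about — and the closest thing here to an obstacle — is the bookkeeping of infinite bases: $U_1$, $U_2$, and $W$ are all potentially infinite dimensional over $F$, so one should phrase the decomposition $T=\sum$ in terms of the natural isomorphism $\Hom_F(U_1\otimes W,\,U_2\otimes W)$, observing that any element of $U_1\otimes W$ is a finite sum of simple tensors and that $T$ is determined by its values on a basis, with each such value a finite sum in $U_2\otimes W$. Once the identification $\Hom_F(U_1\otimes W, U_2\otimes W)\cong \prod$-over-pairs-of-$\Hom_F(W,W)$-entries-with-column-finite-support is set up correctly, the argument that the $N$-invariant maps are exactly those whose entries lie in $\End_N(W)=F$ is formal. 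No appeal to finiteness of $A$ or finite dimensionality is needed anywhere; the whole proof is a direct computation with the tensor decomposition plus the hypothesis $(W,W)_N=1$.
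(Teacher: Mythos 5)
Your proof is correct and follows the same two-step structure as the paper's: first, using only the $N$-action and the hypothesis $(W,W)_N=1$, you obtain $T=S\otimes 1_W$ for some $S\in\Hom_F(U_1,U_2)$; second, you feed the full $G$-equivariance of $T$ back in to conclude that $S$ is a $G$-map. The only difference is that the paper disposes of the first step by citing \cite[Lemma 3.7]{S}, whereas you prove it from scratch by fixing bases of $U_1$ and $U_2$, expanding $T$ into an array of operators $T_{ba}\in\End_F(W)$, observing that $N$-equivariance forces each $T_{ba}\in\End_N(W)=F\cdot 1_W$, and reassembling. That self-contained argument is sound. One small ordering issue: you assert ``only finitely many $T_{ba}$ nonzero for each $a$'' before establishing that the $T_{ba}$ are scalars; as stated that finiteness is not automatic, since the support in $b$ of $T(u^{(1)}_a\otimes w)$ can a priori grow with $w$. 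It \emph{does} become true once you know $T_{ba}=s_{ba}1_W$: evaluating $T(u^{(1)}_a\otimes w_0)$ at a single nonzero $w_0$ shows that only finitely many $s_{ba}$ can be nonzero, so $S(u^{(1)}_a)=\sum_b s_{ba}u^{(2)}_b$ is well defined. Reordering those two remarks would make the write-up airtight; the mathematics is otherwise fine, and your second step (injectivity of $x\mapsto x\otimes v$ for fixed nonzero $v$) is exactly what is needed to deduce $S(gu)=gS(u)$.
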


\begin{proof} Since $T\in\Hom_N(U_1\otimes W,U_2\otimes W)$, \cite[Lemma 3.7]{S}
implies $T=S\otimes 1$, where $S\in\Hom_F(U_1,U_2)$. But
$T\in\Hom_G(U_1\otimes W,U_2\otimes W)$, so
$S\in\Hom_{G}(U_1,U_2)$.
\end{proof}

\begin{theorem}\label{cli4}  Let $N\unlhd G$ be groups and let $W$ be a
$G$-module with $\res_N^G W$ irreducible and $(W,W)_N=1$. Let $U$
be an irreducible $G$-module acted upon trivially by $N$. Then
$U\otimes W$ is an irreducible $G$-module.
\end{theorem}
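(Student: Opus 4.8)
The plan is to deduce Theorem~\ref{cli4} from Lemma~\ref{formol} by computing the endomorphism algebra of $U\otimes W$ and invoking the standard fact that a module over $F$ is irreducible once we know it is semisimple with a one-dimensional endomorphism algebra. Actually, since $F$ need not be algebraically closed, I cannot simply cite Schur's lemma in reverse; instead I would argue directly that $U\otimes W$ has no proper nonzero $G$-submodule, using the structure supplied by Lemma~\ref{formol}.

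First I would reduce to the endomorphism computation. By Lemma~\ref{formol}, applied with $U_1=U_2=U$, every $T\in\End_G(U\otimes W)$ has the form $T=S\otimes 1$ with $S\in\End_G(U)$; conversely every such $S\otimes 1$ is a $G$-endomorphism. Hence $\End_G(U\otimes W)\cong\End_G(U)$ as $F$-algebras. More importantly, I would use Lemma~\ref{formol} to control submodules: if $0\neq Z\subseteq U\otimes W$ is a $G$-submodule, then in particular $Z$ is an $N$-submodule, and since $\res_N^G(U\otimes W)\cong U_0\otimes W$ where $U_0$ is a trivial $N$-module of dimension $\dm_F U$ (possibly infinite) and $\res_N^G W$ is irreducible with $(W,W)_N=1$, the $N$-submodule structure of $U\otimes W$ is governed by \cite[Lemma 3.7]{S} exactly as in the proof of Lemma~\ref{formol}: every $N$-submodule is of the form $U'\otimes W$ for a subspace $U'\subseteq U$.

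Next I would upgrade this to $G$. Given a nonzero $G$-submodule $Z=U'\otimes W$, I would show $U'$ is $G$-invariant. Fix $g\in G$ and consider the map $U\to U$ sending $u\mapsto$ (the $U$-component of $g(u\otimes w_0)$ for a fixed $0\neq w_0\in W$ read back through the identification $U\otimes W\cong U\otimes W$); more cleanly, since $Z$ is $G$-stable and also an $N$-submodule of the prescribed form, the projection argument of \cite[Lemma 3.7]{S} identifies the ``$U$-part'' functorially, so $g$-stability of $Z$ forces $g$-stability of $U'$. Then $U'$ is a $G$-submodule of the irreducible $G$-module $U$, whence $U'=0$ or $U'=U$, i.e. $Z=0$ or $Z=U\otimes W$. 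This proves $U\otimes W$ is irreducible.

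The main obstacle is making the passage from ``$N$-submodules are $U'\otimes W$'' to ``the $U'$ attached to a $G$-submodule is itself $G$-stable'' fully rigorous without assuming $U$ (equivalently $W$) is finite dimensional; the temptation is to pick bases and write matrices, but that breaks in the infinite-dimensional setting that \cite{S} is designed to handle. The clean route is to observe that $W$ embeds $N$-equivariantly in $U\otimes W$ via $w\mapsto u_0\otimes w$ only after choosing $u_0$, which is not canonical; instead one should work with the natural $N$-module isomorphism $\Hom_N(W,U\otimes W)\cong U$ coming from $(W,W)_N=1$, note this is a $G$-module isomorphism where $G$ acts on $\Hom_N(W,U\otimes W)$ in the usual conjugation-twisted way (here $W$ is a $G$-module so this $G$-action is honest), and then any $G$-submodule $Z$ yields the $G$-submodule $\Hom_N(W,Z)\subseteq\Hom_N(W,U\otimes W)\cong U$, which must be $0$ or all of $U$; evaluating recovers $Z$. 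Carrying out this identification carefully, citing \cite[Lemma 3.7]{S} for the underlying linear-algebra fact, is the only delicate point.
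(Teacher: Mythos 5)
The paper's ``proof'' of Theorem~\ref{cli4} is just the citation to \cite[Theorem 3.11]{S}, so there is no in-paper argument to compare against; your proposal is a self-contained reconstruction, and its final form is correct. The decisive idea is the one you land on in the last paragraph: from $(W,W)_N=1$ and the irreducibility of $\res_N^G W$ one gets a natural isomorphism $U\to\Hom_N(W,U\otimes W)$, $u\mapsto(w\mapsto u\otimes w)$, which is honestly $G$-equivariant for the conjugation action $(g\cdot f)(w)=g\,f(g^{-1}w)$ because both $W$ and $U\otimes W$ are genuine $G$-modules. Surjectivity in the infinite-dimensional setting follows since $W$ is $N$-cyclic, so any $N$-map $W\to U\otimes W$ lands in $U'\otimes W$ for a finite-dimensional $U'$, reducing to the finite case; this is the content of \cite[Lemma 3.7]{S}. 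Then for a $G$-submodule $Z$, $\Hom_N(W,Z)$ is a $G$-submodule of $\Hom_N(W,U\otimes W)\cong U$, hence $0$ or $U$; semisimplicity of $\res_N(U\otimes W)$ (a direct sum of copies of the irreducible $W$) shows $\Hom_N(W,Z)=0$ forces $Z=0$, and $\Hom_N(W,Z)=U$ gives $u\otimes w\in Z$ for all $u,w$, so $Z=U\otimes W$. One small caveat about the middle of your write-up: the step asserting that every $N$-submodule of $U\otimes W$ has the form $U'\otimes W$ ``exactly as in the proof of Lemma~\ref{formol}'' is not a direct corollary of that lemma, which concerns $N$-\emph{homomorphisms}, not $N$-\emph{submodules}; it does follow from the $\Hom_N(W,-)$ description, but you should derive it that way (or via semisimplicity) rather than attribute it to Lemma~\ref{formol}. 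Since you ultimately replace that sketch with the $\Hom_N(W,-)$ argument, the proof stands.
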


\begin{proof} This can be found in \cite[Theorem 3.11]{S}.
\end{proof}

\section{Symplectic and unitary groups over rings}\label{int}

Let $A$ be a ring, not necessarily finite, endowed with an involution~$*$. Given any $m\geq 1$, we can extend $*$ to an involution,
also denoted by $*$, of $M_{m}(A)$ by declaring:
$$
(B^*)_{ij}=(B_{ji})^*,\quad B\in M_m(A).
$$
Let $V$ be a right $A$-module and let $h:V\times V\to A$ be a skew-hermitian form relative to $*$. This means that $h$ is linear in
the second variable and satisfies:
$$
h(v,u)=-h(u,v)^*,\quad u,v\in  V.
$$
We will further assume that $V$ is free with basis $\B=\{u_1,\dots,u_n,v_1,\dots,v_n\}$ and that the Gram matrix of $h$ relative to $\B$ is
$$
J=\left(
   \begin{array}{cc}
     0 & 1_n \\
     -1_n & 0 \\
   \end{array}
 \right).
$$
Let $U$ stand for the subgroup of $\GL(V_A)$ preserving $h$. (Thus $U$ is a symplectic or unitary group depending on the order of $*$.)

Let $g\in \GL(V_A)$ and let $X=M_\B(g)\in\GL_{2n}(A)$ be the matrix of $g$ relative to~$\B$. Then $g\in U$
if and only if
\begin{equation}
\label{tobe}
X^* J X=J.
\end{equation}
For $w_1,\dots,w_m\in V$, let $\langle w_1,\dots,w_m\rangle$ stand for the $R$-span of $w_1,\dots,w_m$ in $V$. We extend
this notation so that $\langle \emptyset\rangle=(0)$. The use of $\langle \emptyset\rangle$ will be frequent but implicit.

We set
$$
M=\langle u_1,\dots,u_n\rangle,\quad N=\langle v_1,\dots,v_n\rangle.
$$
Let $C$ be the pointwise stabilizer of $M$ in $U$. Then, by (\ref{tobe}), $g\in C$
if and only if
\begin{equation}
\label{forma}
X=\left(
   \begin{array}{cc}
     1_n & S \\
     0 & 1_n \\
   \end{array}
 \right),
\end{equation}
where $S$ belongs to $H_n(A)$, the additive group of all $n\times n$ hermitian matrices, relative to $*$, over $A$.
It is clear that $C\cong H_n(A)$. Let $T$ stand for the global stabilizer of $M$ and $N$. According to (\ref{tobe}), $g\in T$
if and only if
\begin{equation}
\label{xy}
X=\left(
   \begin{array}{cc}
     Y & 0 \\
     0 & (Y^{-1})^* \\
   \end{array}
 \right),
\end{equation}
where $Y\in\GL_n(A)$. Thus $T\cong\GL_n(A)$. It is clear that $C$ and $T$ intersect trivially and $T$ normalizes $C$.
In fact,
\begin{equation}
\label{forma2}
\left(
   \begin{array}{cc}
     Y & 0 \\
     0 & (Y^{-1})^* \\
   \end{array}
 \right)\left(
   \begin{array}{cc}
     1_n & S \\
     0 & 1_n \\
   \end{array}
 \right)\left(
   \begin{array}{cc}
     Y^{-1} & 0 \\
     0 & Y^* \\
   \end{array}
 \right)=\left(
   \begin{array}{cc}
     1_n & YSY^* \\
     0 & 1_n \\
   \end{array}
 \right),
\end{equation}
so that $C\rtimes T\cong H_n(A)\rtimes \GL_n(A)$, with $\GL_n(A)$ acting on $H_n(A)$ by congruence transformations, that is,
$Y\cdot S=YSY^*$.

Let us write $\U_n(A)$ for the upper unitriangular group, that is, the
subgroup of $\GL_n(A)$ of all upper triangular matrices with 1's along the main diagonal.
Let $L$ stand for the subgroup of $T$ of all $g\in T$ satisfying
$$
gu_i\equiv u_i\mod\langle u_1,\dots,u_{i-1}\rangle,\quad 1\leq i\leq n.
$$
Thus, $g\in T$ is in $L$ if and only if $Y\in \U_n(A)$ in (\ref{xy}) (or, equivalently, $(Y^{-1})^*$ is
lower unitriangular in (\ref{xy})).

We are concerned with certain irreducible representations of the unipotent group
$$
P=C\rtimes L.
$$

For $0\leq i\leq n$, let $H_i$ be the subgroup of all $S\in H_n(A)$ whose first $i$ rows/columns
are arbitrary and whose remaining lower right $(n-i)\times (n-i)$ block is equal to 0.

For $0\leq i\leq n$, we set
$$
C_i=\{g\in C\,|\, gw\equiv w\mod\langle u_1,\dots,u_i\rangle\text{ for all }w\in \langle v_{i+1},\dots,v_n\rangle\}.
$$
Then a given $g\in C$ is in $C_i$ if and only if $S$, as in (\ref{forma}), is in $H_i$. Thus
$$1=C_0\subset C_1\subset\cdots\subset C_n=C$$
are subgroups of $C$.

\begin{lemma}\label{nors}
Each $C_i$ is a normal subgroup of $P$.
\end{lemma}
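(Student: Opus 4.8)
The plan is to verify directly that each $C_i$ is normalized by the two factors $C$ and $L$ of $P=C\rtimes L$, since together they generate $P$. For the action of $C$ on itself: $C$ is abelian (it is isomorphic to the additive group $H_n(A)$ via (\ref{forma})), so $C$ normalizes every one of its subgroups, in particular $C_i$. Thus the only real content is that $L$ normalizes $C_i$.

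To handle the action of $L$, I would translate everything into matrix language using (\ref{forma2}): if $g\in C$ corresponds to $S\in H_n(A)$ and $\ell\in L$ corresponds to $Y\in\U_n(A)$, then conjugation sends $S$ to $YSY^*$. So the claim reduces to the purely matrix-theoretic statement that $YH_iY^*\subseteq H_i$ for every $Y\in\U_n(A)$, where $H_i$ is the set of hermitian matrices supported on the first $i$ rows and columns (i.e. whose lower right $(n-i)\times(n-i)$ block vanishes). The key step is a block computation: write $Y=\left(\begin{smallmatrix} Y_{11} & Y_{12}\\ 0 & Y_{22}\end{smallmatrix}\right)$ in $i,(n-i)$ block form, where $Y_{22}\in\U_{n-i}(A)$ because $Y$ is upper unitriangular, and write $S=\left(\begin{smallmatrix} S_{11} & S_{12}\\ S_{12}^* & 0\end{smallmatrix}\right)$ for $S\in H_i$. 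Multiplying out $YSY^*$ and reading off the lower right block, one finds it equals $Y_{22}\cdot 0\cdot Y_{22}^* = 0$ (all the surviving terms involve the zero block of $S$ sitting between the two zero blocks in the lower-left of $Y$ and lower-left of $Y^*$); hence $YSY^*\in H_i$ as well. This shows $\ell C_i\ell^{-1}\subseteq C_i$, and since $\ell$ was arbitrary in $L$ we also get the reverse inclusion, so $C_i\unlhd P$.

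Alternatively, and perhaps more cleanly, one can argue intrinsically without choosing a block decomposition: by definition $C_i=\{g\in C \mid gw\equiv w \bmod \langle u_1,\dots,u_i\rangle \text{ for all } w\in\langle v_{i+1},\dots,v_n\rangle\}$. Every $\ell\in L$ stabilizes both $\langle u_1,\dots,u_i\rangle$ (since $\ell u_j\equiv u_j \bmod \langle u_1,\dots,u_{j-1}\rangle$) and $\langle v_{i+1},\dots,v_n\rangle$ (from the block form (\ref{xy}), $\ell$ acts on $N$ by the lower-unitriangular matrix $(Y^{-1})^*$, which stabilizes the span of the last $n-i$ basis vectors of $N$ — here one uses that $Y\in\U_n(A)$ forces the appropriate corner block of $(Y^{-1})^*$ to vanish). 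Given $g\in C_i$ and $w\in\langle v_{i+1},\dots,v_n\rangle$, write $w=\ell^{-1}w'$ with $w'\in\langle v_{i+1},\dots,v_n\rangle$; then $(\ell g\ell^{-1})w - w = \ell(g w' - w') \in \ell\langle u_1,\dots,u_i\rangle = \langle u_1,\dots,u_i\rangle$, so $\ell g\ell^{-1}\in C_i$. Combined with the abelian factor $C$, this proves normality.

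The main obstacle is simply bookkeeping: making sure that the "corner block vanishes" assertions for $Y$, $Y^*$, and $(Y^{-1})^*$ are stated correctly for upper unitriangular $Y$ under the chosen block splitting, and that the congruences mod $\langle u_1,\dots,u_i\rangle$ are tracked consistently. There is no conceptual difficulty — the statement is essentially the observation that upper unitriangularity is preserved under passing to leading principal blocks and that $*$-congruence by such matrices respects the filtration $H_0\subset H_1\subset\cdots\subset H_n$.
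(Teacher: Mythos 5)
Your proposal is correct and offers two arguments, both of which take a genuinely different route from the paper. The paper does not use a block decomposition or an intrinsic argument on $M$ and $N$; instead it exploits that $\U_n(A)$ is generated by the elementary transvections $1+ae_{k\ell}$ with $k<\ell$, and checks that conjugation by such a generator is a row operation (adding a multiple of row $\ell$ to row $k$) followed by a column operation (adding a multiple of column $\ell$ to column $k$), neither of which disturbs the lower right $(n-i)\times(n-i)$ zero block. Your first (block-matrix) argument avoids the reduction to generators altogether: writing $Y$ in upper block-triangular $i,(n-i)$ form and expanding $YSY^*$, the lower-right block vanishes because $Y_{21}=0$, $Y_{12}^*=0$, and $S_{22}=0$; this is a single clean computation and is arguably the tidiest of the three. (One small imprecision: not every surviving term dies because of the zero block of $S$ — the term $Y_{22}S_{12}^*Y_{12}^*$ dies because $Y_{12}^*=(Y_{21})^*=0$ — but the conclusion is right.) Your second (intrinsic) argument is the most conceptual, replacing matrix bookkeeping by the observation that $L$ preserves both filtrations $\langle u_1,\dots,u_i\rangle$ and $\langle v_{i+1},\dots,v_n\rangle$. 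There is a minor notational slip: you should set $w'=\ell^{-1}w$ (not $w=\ell^{-1}w'$), so that $(\ell g\ell^{-1})w-w=\ell(gw'-w')\in\ell\langle u_1,\dots,u_i\rangle=\langle u_1,\dots,u_i\rangle$; as written, $w=\ell^{-1}w'$ gives $\ell g\ell^{-2}w'$, which is not what is wanted. After that correction the argument is sound. In short: the paper trades a longer preamble (generators of $\U_n(A)$) for an easier verification on each generator, while your block argument is more self-contained and your intrinsic argument generalizes most readily to coordinate-free settings.
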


\begin{proof} Since $C$ is abelian, it suffices to show that $L$ normalizes $C_i$. In view of (\ref{forma2}), this
is equivalent to $YSY^*\in H_i$ for every $S\in H_i$ and $Y\in\U_n(A)$. Since $\U_n(A)$ is generated by all
$1+ae_{k\ell}$, where $a\in A$ and $k<\ell$, we are reduced to showing that, given $S\in H_i$, we have
$$
T=(1+ae_{k\ell})S(1+a^*e_{\ell k})\in H_i,\quad a\in A, k<\ell.
$$
Now, $(1+ae_{k\ell})S$ is obtained from $S$ by adding a multiple
of row $\ell$ to row $k$, and $T$ is obtained from
$(1+ae_{k\ell})S$ by adding a multiple of column $\ell$ to column
$k$. Thus, the lower right $(n-i)\times (n-i)$ block of $T$ is
still equal to 0. Since $T$ is hermitian, it follows that $T\in
H_i$.
\end{proof}

For $0\leq i\leq n$, we set
$$
L_i=\{g\in L\,|\, gv_{i+1}=v_{i+1},\dots,gv_n=v_n\}=\{g\in L\,|\, gM\subseteq \langle u_{1},\dots,u_i\rangle\}.
$$
Then a given $g\in L$ is in $L_i$ if and only if $Y\in \U_n(A)$, as in (\ref{xy}), has arbitrary $i$ first rows and
the remaining $n-i$ rows are those of $I_n$. Clearly,
$$1=L_0\subset L_1\subset\cdots\subset L_n=L$$
are subgroups of $L$.

\begin{lemma}\label{nors2}
Each $L_i$ is a normal subgroup of $L$.
\end{lemma}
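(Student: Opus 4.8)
The plan is to mimic the proof of Lemma~\ref{nors}, reducing to the generators of $L$ and tracking how they act on the matrix $Y$ in the parametrization (\ref{xy}). Recall that $g\in L$ sits in $L_i$ precisely when the matrix $Y\in\U_n(A)$ has its last $n-i$ rows equal to those of $I_n$; equivalently, writing $Y=I_n+Z$ with $Z$ strictly upper triangular, the rows $i+1,\dots,n$ of $Z$ vanish. Since $L\cong\U_n(A)$ is generated by the elementary unitriangular matrices $1+ae_{k\ell}$ with $a\in A$ and $k<\ell$, it suffices to show that conjugation of an arbitrary $Y=I_n+Z\in\U_n(A)$ with $Z$ supported on rows $1,\dots,i$ by such a generator again produces a matrix whose off-diagonal part is supported on rows $1,\dots,i$.

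The key computation is straightforward: $(1+ae_{k\ell})Y(1+ae_{k\ell})^{-1}=(1+ae_{k\ell})Y(1-ae_{k\ell})$ is obtained from $Y$ by adding $a$ times row $\ell$ of $Y$ to row $k$ of $Y$, and then subtracting $a$ times column $k$ of the result from column $\ell$. The row operation alters only row $k$; if $k\leq i$ this does not touch rows $i+1,\dots,n$, while if $k>i$ then (since $k<\ell$) also $\ell>i$, so row $\ell$ of $Y$ is already the row of $I_n$, meaning the added multiple of row $\ell$ contributes only to the $(k,\ell)$ entry — and then the subsequent column operation, which modifies only column $\ell$ by subtracting a multiple of column $k$, restores the diagonal structure in rows $i+1,\dots,n$ because in those rows $Y$ and $e_{k\ell}$-type corrections have a controlled shape. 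Concretely, one checks directly that for $m>i$ the $m$-th row of the conjugate still equals the $m$-th row of $I_n$: the only potentially new nonzero entry produced in row $m$ would be in position $(m,\ell)$ or $(m,k)$, and a short case analysis on the positions of $k,\ell$ relative to $i$ and $m$ shows every such entry cancels or was already zero.

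I would organize the write-up as: (i) reduce to generators $1+ae_{k\ell}$; (ii) observe the conjugation is a row operation followed by a column operation on $Y$; (iii) split into the cases $k\leq i$ and $k>i$ (forcing $\ell>i$), verifying in each that rows $i+1,\dots,n$ of the conjugate remain rows of $I_n$; (iv) conclude membership in $L_i$. The main obstacle, if any, is purely bookkeeping: one must be careful that a column operation on column $\ell$ can in principle disturb row $m$ for any $m$, so the argument genuinely needs the unitriangular shape of $Y$ (its rows below $i$ are already standard basis rows) to guarantee no bad entry appears; this is the one spot where a clean case distinction is essential rather than a one-line appeal to "block structure." Everything else parallels Lemma~\ref{nors} and requires no new ideas.
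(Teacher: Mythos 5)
Your proposal is correct, but it takes a genuinely different and more laborious route than the paper. You model the proof on Lemma~\ref{nors} by reducing to the elementary generators $1+ae_{k\ell}$ of $\U_n(A)$ and tracking the effect of conjugation on the matrix $Y$, splitting into the cases $k\leq i$ and $k>i$; the cancellation you flag in the case $k>i$ (the column operation killing the newly created $(k,\ell)$-entry) is real and your bookkeeping checks out, since $Y'=(1+ae_{k\ell})(I+Z)(1-ae_{k\ell})=I+Z+ae_{k\ell}Z-Zae_{k\ell}-ae_{k\ell}Zae_{k\ell}$ and $ae_{k\ell}Z=0$ when $\ell>i$. The paper instead exploits the second, coordinate-free characterization of $L_i$ it records when defining these subgroups, namely that $g\in L$ lies in $L_i$ exactly when $(g-1)M\subseteq\langle u_1,\dots,u_i\rangle$ (the paper's display writes $gM$, but the proof makes clear $(g-1)M$ is meant). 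With that in hand the whole lemma is the one-line conjugation computation
\[
(ghg^{-1}-1)M \;=\; g(h-1)g^{-1}M \;=\; g(h-1)M \;\subseteq\; g\langle u_1,\dots,u_i\rangle \;\subseteq\; \langle u_1,\dots,u_i\rangle,
\]
using only that $g\in L$ preserves $M$ and each flag subspace $\langle u_1,\dots,u_i\rangle$. So the paper's argument avoids generators and the case analysis entirely; your version is perfectly valid and maximally explicit, but it re-derives by hand what the geometric reformulation makes immediate, and it misses that the author set up the alternative description of $L_i$ precisely to make this lemma (and, implicitly, Lemma~\ref{nors3}) a triviality.
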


\begin{proof} If $g\in L$, $h\in L_i$ then $
ghg^{-1}M\subseteq ghM\subseteq g\langle u_{1},\dots,u_i\rangle\subseteq \langle u_{1},\dots,u_i\rangle
$.
\end{proof}

For $1\leq i\leq n$ and $0\leq j\leq i$, we set
$$
N_{ij}=C_i\rtimes L_j.
$$

\begin{lemma}\label{nors3}
Each $N_{ij}$ is a normal subgroup of $P$.
\end{lemma}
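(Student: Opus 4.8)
The plan is to show $N_{ij} = C_i \rtimes L_j$ is normal in $P = C \rtimes L$ by verifying that it is stabilized under conjugation by both $C$ and $L$. Since $C$ is abelian and normalizes everything inside $C$, and since conjugation by $C$ fixes the $L_j$-part only up to elements of $C$, the first observation is that we must check $C$ normalizes $N_{ij}$ and $L$ normalizes $N_{ij}$ separately; together these give normality in $P$ because $P$ is generated by $C$ and $L$.

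First I would handle conjugation by $L$. We already know from Lemma~\ref{nors} that $L$ normalizes $C_i$, and from Lemma~\ref{nors2} that $L$ normalizes $L_j$ (note $j \le i \le n$, so $L_j$ is among the subgroups covered there). Given $g \in L$ and an element $c\ell \in N_{ij}$ with $c \in C_i$, $\ell \in L_j$, we have $g(c\ell)g^{-1} = (gcg^{-1})(g\ell g^{-1})$, and both factors lie in $C_i$ and $L_j$ respectively, so the product lies in $N_{ij}$. Hence $L$ normalizes $N_{ij}$.

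Next I would handle conjugation by $C$. Here the key point is that for $c' \in C$ and $\ell \in L_j$, the commutator $c'\ell c'^{-1}\ell^{-1}$ lies in $C$, so $c'\ell c'^{-1} = (c'\ell c'^{-1}\ell^{-1})\ell \in C\cdot L_j$. Thus conjugation by $C$ sends $N_{ij}$ into $C \cdot L_j$, and it remains to check that the $C$-component actually lands in $C_i$. Concretely, using the matrix descriptions: $\ell$ corresponds to $Y \in \U_n(A)$ whose last $n-j$ rows are those of $I_n$, and $c'$ corresponds to $S' \in H_n(A)$; the commutator corresponds to the hermitian matrix $(Y - I_n)$-type contribution, and one checks that since $Y$ differs from the identity only in its first $j \le i$ rows, the resulting hermitian matrix has its lower-right $(n-i)\times(n-i)$ block equal to $0$, i.e.\ lies in $H_i$. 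This is the analogue of the computation already carried out in the proof of Lemma~\ref{nors}, just applied to the commutator rather than to $YSY^*$.

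The main obstacle, and the only step requiring genuine care, is the last one: verifying that the $C$-component of $c'\ell c'^{-1}$ lands in $C_i$ rather than merely in $C$. I would carry this out by an explicit $2\times 2$ block computation of $\left(\begin{smallmatrix} 1_n & S' \\ 0 & 1_n \end{smallmatrix}\right)\left(\begin{smallmatrix} Y & 0 \\ 0 & (Y^{-1})^* \end{smallmatrix}\right)\left(\begin{smallmatrix} 1_n & -S' \\ 0 & 1_n \end{smallmatrix}\right)\left(\begin{smallmatrix} Y^{-1} & 0 \\ 0 & Y^* \end{smallmatrix}\right)$, reading off that the top-right block is $S' - YS'Y^*$ (up to the appropriate hermitian symmetrization), and then observing that since $Y - I_n$ is supported in its first $j$ rows with $j \le i$, and $S' \in H_n(A)$ is hermitian, the lower-right $(n-i)\times(n-i)$ block of $S' - YS'Y^*$ vanishes; the hermitian symmetry then forces the whole matrix into $H_i$, exactly as in Lemma~\ref{nors}. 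Everything else is formal manipulation with semidirect products.
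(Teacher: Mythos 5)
Your proof is correct and follows essentially the same route as the paper's: both reduce the problem, via Lemmas~\ref{nors} and~\ref{nors2}, to showing that $[C,L_j]$ lands inside $C_i$ (the paper proves the slightly stronger $[C,L_j]\subseteq C_j$), and both verify this by noting that the top-right block of the relevant conjugate is $S'-YS'Y^*$, which has vanishing lower-right block because $Y-1_n$ is supported in the first $j$ rows. The only cosmetic difference is that the paper further reduces to the elementary generators $Y=1+ae_{k\ell}$ of $\U_n(A)$ together with the identity $[x,yz]=[x,y]\,y[x,z]y^{-1}$, whereas you compute directly with a general $Y\in L_j$; also, your parenthetical ``up to the appropriate hermitian symmetrization'' is an unnecessary hedge, since the block really is exactly $S'-YS'Y^*$, which is already hermitian.
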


\begin{proof} In view of Lemmas \ref{nors} and \ref{nors2}, it suffices to show that $[C,L_j]\subseteq C_j$.
For this purpose, note that
\begin{equation}
\label{fort}
S-(1+ae_{k\ell})S(1+a^*e_{\ell k})\in H_j,\quad a\in A, k<\ell\leq j, S\in H_n(A).
\end{equation}
This is clear, since $S$ and $(1+ae_{k\ell})S(1+a^*e_{\ell k})$
have identical lower right blocks of size $(n-j)\times (n-j)$. It
follows from (\ref{forma2}) and (\ref{fort}) that if $x\in C$ and
$y\in L_j$ is represented by (\ref{xy}) with $Y=1+ae_{k\ell}$,
where $a\in A$ and $k<\ell\leq j$, then
\begin{equation}
\label{fort2} [x,y]\in C_j.
\end{equation}
On the other hand, the matrices representing $L_j$ are generated by all such $Y$,
$$
[x,yz]=[x,y]y[x,z]y^{-1},
$$
and $C_j$ is normal in $P$ by Lemma \ref{nors}. This and (\ref{fort2}) yield $[C,L_j]\subseteq C_j$.
\end{proof}

For $1\leq i\leq n$, we set
$$
N_i=C_i\rtimes L_i,\quad N_i^0=C_i\rtimes L_{i-1}.
$$
\begin{cor}\label{nors4}
Each $N_{i}$ and $N_{i}^0$ is a normal subgroup of $P$.
\end{cor}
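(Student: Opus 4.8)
My plan is to simply recognize both $N_i$ and $N_i^0$ as special cases of the family $N_{ij}=C_i\rtimes L_j$ already handled by Lemma~\ref{nors3}. Indeed, by definition $N_i=C_i\rtimes L_i$, which is exactly $N_{ii}$, and $N_i^0=C_i\rtimes L_{i-1}$, which is exactly $N_{i,i-1}$. So the entire content of the corollary reduces to checking that the index pairs $(i,i)$ and $(i,i-1)$ fall within the range $1\leq i\leq n$, $0\leq j\leq i$ for which Lemma~\ref{nors3} was proved.

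For the pair $(i,i)$ this is immediate. For the pair $(i,i-1)$ the only point to verify is $j=i-1\geq 0$, which holds precisely because the corollary is stated for $1\leq i\leq n$, so $i-1\geq 0$; and of course $i-1\leq i$. Hence Lemma~\ref{nors3} applies verbatim and gives that $N_{ii}=N_i$ and $N_{i,i-1}=N_i^0$ are normal in $P$.

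There is essentially no obstacle here: the work was done in establishing Lemmas~\ref{nors}, \ref{nors2} and \ref{nors3} (in particular the commutator estimate $[C,L_j]\subseteq C_j$), and the corollary is a pure specialization of indices. If one wished to make the argument self-contained rather than citing Lemma~\ref{nors3}, one would just repeat its proof with $j$ set to $i$ and to $i-1$ respectively, but that is not needed.

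\begin{proof} Both assertions are immediate from Lemma~\ref{nors3}: one has $N_i=C_i\rtimes L_i=N_{ii}$ and $N_i^0=C_i\rtimes L_{i-1}=N_{i,i-1}$, and since $1\leq i\leq n$ we have $0\leq i-1\leq i$, so the index conditions of Lemma~\ref{nors3} are satisfied in both cases. \end{proof}
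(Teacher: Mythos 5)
Your proof is correct and matches the paper's intent exactly: the corollary is stated without a proof precisely because $N_i=N_{ii}$ and $N_i^0=N_{i,i-1}$ are instances of $N_{ij}$ with indices in the range covered by Lemma~\ref{nors3}, which is the observation you make.
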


\section{Irreducible modules arising from Clifford's theorem}\label{t1}

Let
$$
R=\{r\in R\,|\, r^*=r\}.
$$
This is a subgroup of $A^+$. If $R$ is central in $A$, then $R$ is also a subring of $A$.

\begin{lemma}\label{ula} Let $v\in N$. Then
the map $\delta_v:C\to A$, given by,
$$
\delta_v(x)=h(xv,v),\quad x\in C,
$$
is a group homomorphism satisfying $\delta_v(x)\in R$ for all
$x\in C$.
\end{lemma}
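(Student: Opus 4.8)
The plan is to unwind the definitions of $h$, $C$, and the involution, and then perform the two computations separately: first that $\delta_v$ is additive, and second that its image lands in $R$.

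First I would write an arbitrary $x\in C$ in matrix form. By (\ref{forma}), if $X=M_\B(x)$ then $X=\begin{pmatrix}1_n & S\\ 0 & 1_n\end{pmatrix}$ with $S\in H_n(A)$, and if $y\in C$ corresponds to $S'$ then $xy$ corresponds to $S+S'$, since $C\cong H_n(A)$ is abelian. Writing $v=\sum_k v_k c_k$ for suitable $c_k\in A$ (or, since $v\in N$, one may just track the coordinate vector of $v$ in the basis $\B$), the action of $x$ on $v$ moves the "$N$-part" into the "$M$-part" via multiplication by $S$: concretely $xv = v + \sum u_\ell (\text{something linear in }S)$. Then $h(xv,v)$ picks out the pairing between the $M$-component of $xv$ and the $N$-component of $v$, governed by the Gram matrix $J$. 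Because the $M$-component of $xv$ depends $A$-linearly (in fact additively in $S$) on $S$, and $h$ is additive in each slot, additivity $\delta_v(xy)=\delta_v(x)+\delta_v(y)$ follows: replacing $S$ by $S+S'$ adds the corresponding terms. I expect this to be a short bilinear-algebra computation once the coordinates are set up; the one point to be careful about is that $h$ is $A$-linear in the \emph{second} variable and conjugate-linear in the first, so the scalars $c_k$ attached to $v$ appear on both sides and the cross terms cancel correctly.

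For the second claim, that $\delta_v(x)=h(xv,v)\in R$, i.e. $\delta_v(x)^*=\delta_v(x)$, I would use the skew-hermitian property $h(a,b)=-h(b,a)^*$ together with the fact that $x\in C\subseteq U$ preserves $h$. Write $w=xv$. Since $x$ preserves $h$ and $x$ fixes $M$ pointwise, and $v-w\in M$ (this is exactly the content of $X$ having the shape (\ref{forma}): $xv\equiv v \bmod M$), we get $h(w,v)=h(xv,xv') $-type manipulations. More directly: $h(xv,v) = h(xv, v)$; apply skew-hermitian to get $h(xv,v) = -h(v,xv)^*$. Now $h(v,xv) = h(v, v + m)$ where $m=xv-v\in M$, so $h(v,xv) = h(v,v) + h(v,m)$. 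Since $h$ restricted to $N$ is zero (the Gram matrix (\ref{forma})... rather $J$ shows $h(v_i,v_j)=0$), $h(v,v)=0$, so $h(v,xv)=h(v,m)$. On the other hand $h(xv,v) = h(v+m, v) = h(m,v)$ since $h(v,v)=0$. And $h(m,v) = -h(v,m)^*$ by skew-hermicity. Comparing, $\delta_v(x)=h(m,v)$ and also $\delta_v(x) = -h(v,m)^* = h(m,v)^{**} = h(m,v)$... hmm, this would need $h(m,v)^* = -h(v,m)$ applied twice. Let me instead argue: $\delta_v(x)^* = h(xv,v)^* = -h(v,xv) = -h(v,v+m) = -h(v,m) = h(m,v) = h(xv,v) = \delta_v(x)$, where I used $h(v,v)=0$ and $h(m,v)=-h(v,m)^*$ hence $h(v,m) = -h(m,v)^*$... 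I would straighten this sign-chase in the writeup, but the mechanism is clear: $h(v,v)=0$ on $N$ plus skew-hermicity forces $\delta_v(x)$ to be fixed by $*$.

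The main obstacle, such as it is, is purely bookkeeping: keeping the left/right $A$-module conventions straight, remembering that $h$ is conjugate-linear in the first variable, and handling the general element $v\in N$ (which carries arbitrary $A$-coefficients on $v_1,\dots,v_n$) rather than just a basis vector. Once one observes that $h(v,v)=0$ for all $v\in N$ — immediate from the block form of $J$, since $N$ is totally isotropic — both assertions reduce to one-line manipulations, so there is no serious difficulty; I would simply be careful that the homomorphism property uses additivity of $S\mapsto S+S'$ under multiplication in $C$, and the $R$-valued property uses isotropy of $N$ together with the defining identity of a skew-hermitian form.
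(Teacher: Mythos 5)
Your treatment of the homomorphism property is fine and is essentially the paper's argument in coordinates: the paper works directly with $xv=v+u$, $yv=v+w$, $xyv=v+u+w$ and additivity of $h$ in the second slot, which is the coordinate-free version of your observation that $x\mapsto S$ intertwines the group law of $C$ with addition in $H_n(A)$.

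The $R$-valuedness, however, has a genuine gap in your sign chase, and the ``mechanism'' you announce at the end is not actually sufficient. The offending step is $-h(v,m)=h(m,v)$: skew-hermicity only gives $h(m,v)=-h(v,m)^*$, which differs from what you need by an application of $*$. Contrary to your summary, isotropy of $N$ together with the skew-hermitian identity do \emph{not} by themselves force $\delta_v(x)\in R$; you must also use that $x$ preserves $h$ (equivalently, that the block $S$ in (\ref{forma}) is hermitian, not an arbitrary matrix). You mention $h$-invariance under $x$ early in the paragraph but never invoke it in the final chain, and that is exactly the missing ingredient. The paper applies it as $h(v,xv)=h(x^{-1}v,v)$ together with $x^{-1}v=v-u$, giving $\delta_v(x)^*=h(xv,v)^*=-h(v,xv)=-h(x^{-1}v,v)=-h(v-u,v)=h(u,v)=h(xv,v)=\delta_v(x)$, with $h(v,v)=0$ absorbing the $v$-term. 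Once you insert that one step, your argument closes; without it, the displayed chain is simply false as written (take $S$ non-hermitian to see $h(m,v)\notin R$ in general).
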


\begin{proof} We first verify that $\delta_v$ takes values in $R$ and not just in $A$. Indeed, given $x\in C$
we have $xv=v+u$, where $u\in M$, so $v=x^{-1}v+u$, whence $x^{-1}v=v-u$. Moreover, since $h(v,v)=0$, we have
$$
\begin{aligned}
h(xv,v)^* &=-h(v,xv)
=-h(v,v+u)
=-h(v,u)
=h(v,v-u)
=h(v,x^{-1}v)\\
&=h(xv,v).
\end{aligned}
$$
This shows $h(xv,v)\in R$. Next we show that $\delta_v$ is a group homomorphism. Indeed, suppose $y\in C$. Then
$yv=v+w$, where $w\in M$, so
$$
h(xyv,v)=h(x(v+w),v)=h(v+u+w,v)=h(xv,v)+h(yv,v).
$$
\end{proof}

\begin{cor}\label{nors5} Let $\lm:A^+\to F^*$ be a linear character and let $v\in N$. Then
the map $\chi_{v,\lm}:C\to F^*$, given by,
\begin{equation}
\label{ch}
\chi_{v,\lm}(x)=\lm(h(xv,v)),\quad x\in C,
\end{equation}
is a group homomorphism.
\end{cor}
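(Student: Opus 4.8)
The statement to prove is Corollary~\ref{nors5}: that $\chi_{v,\lm}=\lm\circ\delta_v$ is a group homomorphism $C\to F^*$.

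The plan is essentially immediate from Lemma~\ref{ula}. First I would observe that by Lemma~\ref{ula}, the map $\delta_v:C\to A$ defined by $\delta_v(x)=h(xv,v)$ is a group homomorphism (into the additive group $A^+$, since the lemma shows $\delta_v(xy)=\delta_v(x)+\delta_v(y)$), and moreover its image lies in $R\subseteq A^+$. Next, by hypothesis $\lm:A^+\to F^*$ is a linear character, i.e.\ a group homomorphism from the additive group of $A$ to the multiplicative group $F^*$. The composite of two group homomorphisms is a group homomorphism, so $\chi_{v,\lm}=\lm\circ\delta_v:C\to F^*$ is a group homomorphism. Concretely, for $x,y\in C$ one has
$$
\chi_{v,\lm}(xy)=\lm(h(xyv,v))=\lm(\delta_v(xy))=\lm(\delta_v(x)+\delta_v(y))=\lm(\delta_v(x))\lm(\delta_v(y))=\chi_{v,\lm}(x)\chi_{v,\lm}(y).
$$

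There is essentially no obstacle here: the only content is the additivity of $\delta_v$, which is precisely Lemma~\ref{ula}, and the multiplicativity of $\lm$, which is its defining property. The role of the corollary is organizational — it records the linear character $\chi_{v,\lm}$ of $C$ that will be used (together with a choice of $v$ among the $v_i$) to build the modules $V_{i,\lm}$ in Theorem~\ref{maco}. The fact that $\delta_v$ takes values in $R$ rather than merely in $A$ is not needed for the corollary itself, but it explains why, in Theorem~\ref{maco}(c), the isomorphism class of the resulting module depends only on $\lm|_R$.
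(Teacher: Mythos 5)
Your proof is correct and is precisely the argument the paper leaves implicit (the corollary carries no written proof because it is an immediate composition of the additive homomorphism $\delta_v$ from Lemma~\ref{ula} with the linear character $\lm$). Nothing more to add.
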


Note that given $v\in N$, a linear character $\lm:A^+\to F^*$, and $t\in T$, we have
\begin{equation}
\label{keko} ({}^t
\chi_{v,\lm})(x)=\chi_{v,\lm}(x^t)=\chi_{v,\lm}(t^{-1}x
t)=\chi_{tv,\lm}(x),\quad x\in C.
\end{equation}

Let $L_v$ stand for the pointwise stabilizer of $v$ in $L$. By
(\ref{keko}), $L_v$ is contained in the inertia group of
$\chi_{v,\lm}$ in $P$ and $\chi_{v,\lm}$ can be extended to a
linear character of $C\rtimes L_v$, also denoted by
$\chi_{v,\lm}$, by means of the same formula (\ref{ch}). Note that
for $1\leq i\leq n$, $N_i^0$ is contained in $C\rtimes L_{v_i}$,
so $\chi_{v_i,\lm}$ is defined on~$N_i^0$.

\begin{definition}
A linear character $\lm:A^+\to F^*$ is
said to be left admissible if the kernel of the associated linear character $\lm^{\sharp}:A^+\to F^*$,
given by $\lm^{\sharp}(a)\mapsto \lm(a+a^*)$, does not contain any left ideals
in its kernel but $(0)$.
\end{definition}

\begin{prop}\label{po} Suppose $\lm:A^+\to F^*$ is a left admissible linear character and let $1\leq i\leq n$. Then the inertia
group of $\chi_{v_i,\lm}|_{C_i}$ in $N_i$ is $N_i^0$.
\end{prop}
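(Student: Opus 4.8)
My plan is to reduce the claim to a statement about $L_i$ and then settle it by an explicit computation with the form $h$. Since $C_i$ is abelian and normal in $P$ (Lemma \ref{nors}), conjugation by elements of $C_i$ fixes every linear character of $C_i$; writing a typical element of $N_i=C_i\rtimes L_i$ as $xg$ with $x\in C_i$ and $g\in L_i$, and using $(xg)^{-1}y(xg)=g^{-1}yg$ for $y\in C_i$, one sees that $xg$ lies in the inertia group of $\chi_{v_i,\lm}|_{C_i}$ if and only if $g$ does. So everything reduces to deciding for which $g\in L_i$ one has ${}^g(\chi_{v_i,\lm}|_{C_i})=\chi_{v_i,\lm}|_{C_i}$. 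Since $L_i\subseteq T$ and $C_i\unlhd P$, formula (\ref{keko}) gives ${}^g(\chi_{v_i,\lm}|_{C_i})=\chi_{gv_i,\lm}|_{C_i}$, so the question becomes: for which $g\in L_i$ do $\chi_{gv_i,\lm}$ and $\chi_{v_i,\lm}$ agree on $C_i$?

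I would answer this by computing both sides. Represent $g\in L_i$ by $Y\in\U_n(A)$ as in (\ref{xy}) and put $Z=(Y^{-1})^*$, which is lower unitriangular, so $w:=gv_i=v_i+\sum_{k>i}v_kZ_{ki}$. If $x\in C_i$ corresponds to $S\in H_i$ as in (\ref{forma}), then $xv_m=v_m+\sum_k u_kS_{km}$, and since $h(w,w)=0$, additivity of $h$ in its first argument gives $h(xw,w)=h(xw-w,w)$. Expanding this by means of $h(u_k,v_m)=\de_{km}$, the identity $h(va,u)=a^*h(v,u)$ (a consequence of linearity of $h$ in its second variable together with the skew-hermitian relation), the hermitian symmetry $S_{km}=S_{mk}^*$, and the vanishing of the lower right $(n-i)\times(n-i)$ block of $S$, I expect the terms in which both indices exceed $i$ to cancel, leaving
\[
h(xw,w)=S_{ii}+\sum_{k>i}\bigl(b_k+b_k^*\bigr),\qquad b_k=Z_{ki}^*S_{ki}.
\]
Specializing to $g=1$, where $w=v_i$, gives $h(xv_i,v_i)=S_{ii}$, and hence
\[
\chi_{gv_i,\lm}(x)=\chi_{v_i,\lm}(x)\,\prod_{k>i}\lm^{\sharp}\bigl(Z_{ki}^*S_{ki}\bigr),\qquad x\in C_i .
\]

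From here the answer drops out. As $S$ runs over $H_i$, the entries $S_{ki}$ with $k>i$ run freely and independently over $A$ (take $S$ supported on the positions $(i,k)$ and $(k,i)$), so $g$ fixes $\chi_{v_i,\lm}|_{C_i}$ if and only if $Z_{ki}^*A\subseteq\ker\lm^{\sharp}$ for every $k>i$. This is exactly where left admissibility enters: because $\lm^{\sharp}(\al^*)=\lm^{\sharp}(\al)$, the subgroup $\ker\lm^{\sharp}$ is stable under $*$, so $AZ_{ki}=(Z_{ki}^*A)^*$ is a \emph{left} ideal contained in $\ker\lm^{\sharp}$, forcing $AZ_{ki}=(0)$ and therefore $Z_{ki}=0$. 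Conversely, if every $Z_{ki}=0$ the product above is $1$. Thus $g\in L_i$ fixes $\chi_{v_i,\lm}|_{C_i}$ precisely when $Z_{ki}=0$ for all $k>i$, i.e. when $gv_i=v_i$; together with the relations $gv_{i+1}=v_{i+1},\dots,gv_n=v_n$ that already hold for $g\in L_i$, this is exactly the condition $g\in L_{i-1}$. Hence the inertia group of $\chi_{v_i,\lm}|_{C_i}$ in $N_i$ equals $C_i\rtimes L_{i-1}=N_i^0$.

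The step I expect to be the main obstacle is the computation in the second paragraph: one must expand $h(xw,w)$ with every scalar on its correct side (the module is a right module, so $h$ is conjugate-linear in its first slot) and track carefully which entries of $S$ survive under the constraint $S\in H_i$, so that the double sum really collapses to $S_{ii}+\sum_{k>i}(b_k+b_k^*)$. A second, essential, point is converting the right-ideal statement $Z_{ki}^*A\subseteq\ker\lm^{\sharp}$ into the left-ideal form required to invoke admissibility, which the involution $*$ accomplishes. The extreme case $i=n$ needs no separate treatment: the sum over $k>i$ is empty, $L_{n-1}=L_n$, and the argument simply returns $N_n=N_n^0$.
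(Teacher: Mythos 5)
Your proof is correct and follows essentially the same route as the paper: reduce to $g\in L_i$ (using that $C_i$ is abelian and (\ref{keko})), compute the conjugate character on $C_i$, and invoke left admissibility to kill each coordinate $Z_{ki}$ of $gv_i-v_i$. The only cosmetic differences are that you first derive the general identity ${}^g\chi_{v_i,\lm}(x)=\chi_{v_i,\lm}(x)\prod_{k>i}\lm^{\sharp}(Z_{ki}^*S_{ki})$ and then must convert the resulting right ideal $Z_{ki}^*A$ to the left ideal $AZ_{ki}$ via $*$-stability of $\ker\lm^{\sharp}$, whereas the paper tests directly against the element $g_a\in C_i$ with $S_{ji}=a^*$, $S_{ij}=a$, obtaining $\lm^{\sharp}(ac_j)$ and hence the left ideal $Ac_j$ in one step.
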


\begin{proof} Let $B_i$ be the subgroup of $L_i$ of all $g\in L_i$ such that $gv_j=v_j$ for all $j\neq i$. Then
$$
N_i=N_i^0\rtimes B_i.
$$
Suppose $b\in B_i$ stabilizes $\chi_{v_i,\lm}|_{C_i}$. We wish to prove that $b=1$. We have
$$
bv_i=v_i+v_{i+1}c_{i+1}+\cdots+v_n c_n,\quad c_j\in A.
$$
Suppose $i<j\leq n$. We next show that $c_j=0$. Indeed, for $a\in A$ let $g_a\in C_i$ be defined by
$$
g_av_i=v_i+u_j a^*,\; g_av_j=v_j+u_i a,\; g_av_k=v_k\text{ if }k\neq i,j.
$$
Then, on the one hand
$$
\chi_{v_i,\lm}(g_a)=\lm(h(g_av_i,v_i))=\lm(h(v_i+u_j a^*,v_i))=1,
$$
while on the other hand
$$
\begin{aligned}
{}^b \chi_{v_i,\lm}(g_a) &= \lm(h(g_abv_i,bv_i))\\
&= \lm(h(g_a(v_i+v_{i+1}c_{i+1}+\cdots+v_n c_n),bv_i))\\
&=\lm(h(v_i+u_ja^*+v_{i+1}c_{i+1}+\cdots+v_jc_j+u_iac_j+\cdots+v_n c_n,bv_i)\\
&=\lm(ac_j+(ac_j)^*).
\end{aligned}
$$
The left admissibility of $\lm$ and
$$
\lm(ac_j+(ac_j)^*)=1,\quad a\in A,
$$
force $c_j=0$.
\end{proof}

For $1\leq i\leq n$, let $W_i=F w_i$ be a 1-dimensional $N_i^0$-module upon which $N_i^0$ acts via $\chi_{v_i,\lm}|_{N_i^0}$.

\begin{theorem}\label{irrn} Suppose $\lm:A^+\to F^*$ is a left admissible linear character and let $1\leq i\leq n$. Then
$$
U_{i,\lm}=\ind_{N_i^0}^{N_i} W_i
$$
is an irreducible $N_i$-module satisfying $(U_{i,\lm},U_{i,\lm})_{N_i}=1$.
\end{theorem}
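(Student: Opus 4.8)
The plan is to apply Theorem~\ref{cl2} with the normal subgroup $N=C_i$ inside $G=N_i$ and the irreducible module $W=\chi_{v_i,\lm}|_{C_i}$. Since $C_i$ is abelian, $\chi_{v_i,\lm}|_{C_i}$ is a $1$-dimensional, hence irreducible, $C_i$-module, and its endomorphism algebra is just $F$, so $(W,W)_{C_i}=1$ automatically. By Proposition~\ref{po}, the inertia group of $\chi_{v_i,\lm}|_{C_i}$ in $N_i$ is exactly $N_i^0$. Now $W_i$ is a $1$-dimensional $N_i^0$-module lying over $\chi_{v_i,\lm}|_{C_i}$ (it restricts to that character on $C_i$, because $\chi_{v_i,\lm}$ was defined on all of $N_i^0$ by the same formula), so the first assertion of Theorem~\ref{cl2}, applied with $T=N_i^0$ and $S=W_i$, yields that $\ind_{N_i^0}^{N_i}W_i=U_{i,\lm}$ is an irreducible $N_i$-module.

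For the self-intertwining statement I would invoke the "in addition" clause of Theorem~\ref{cl2}: to conclude $(U_{i,\lm},U_{i,\lm})_{N_i}=1$ I need $I_{N_i}(W_i)=C_i$ together with $(W_i,W_i)_{C_i}=1$. The latter is clear as noted. For the former, one checks that $I_{N_i}(W_i)$ coincides with $I_{N_i}(\chi_{v_i,\lm}|_{C_i})$: since $C_i$ acts on $W_i$ exactly through $\chi_{v_i,\lm}|_{C_i}$, an element $g\in N_i$ fixes the isomorphism class of $W_i$ as a $C_i$-module iff it fixes $\chi_{v_i,\lm}|_{C_i}$. But wait — Proposition~\ref{po} gives the inertia group of $\chi_{v_i,\lm}|_{C_i}$ as $N_i^0$, not $C_i$, so this route would only give irreducibility of $\ind_{N_i^0}^{N_i}W_i$ via the first clause, which is what we already have. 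To get $(U_{i,\lm},U_{i,\lm})_{N_i}=1$ I should instead argue directly that $I_{N_i}(W_i)=N_i^0$ as $N_i^0$-modules: the point is that $W_i$ as an $N_i^0$-module has $(W_i,W_i)_{N_i^0}=1$ (it is $1$-dimensional), and since $\res^{N_i^0}_{C_i}W_i$ is irreducible with inertia group $N_i^0$ in $N_i$, the inertia group of $W_i$ itself in $N_i$ is at most $N_i^0$, hence equals $N_i^0$ (it contains $N_i^0$). Then the second sentence of Theorem~\ref{cl2}, with $N$ replaced by $N_i^0$, $G$ by $N_i$, and $W$ by $W_i$, gives both that $U_{i,\lm}$ is irreducible and $(U_{i,\lm},U_{i,\lm})_{N_i}=1$.

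So concretely the steps are: (1) observe $W_i|_{C_i}=\chi_{v_i,\lm}|_{C_i}$ is irreducible and that $(W_i,W_i)_{N_i^0}=1$ since $\dim_F W_i=1$; (2) show $I_{N_i}(W_i)=N_i^0$ by combining Proposition~\ref{po} (which controls the inertia of the restriction to $C_i$) with $N_i^0\leq I_{N_i}(W_i)$; (3) apply the second assertion of Theorem~\ref{cl2} to $N_i^0\unlhd N_i$ and the module $W_i$ to conclude simultaneously that $U_{i,\lm}=\ind_{N_i^0}^{N_i}W_i$ is irreducible and that $(U_{i,\lm},U_{i,\lm})_{N_i}=1$.

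The only genuine subtlety, and the step I expect to need the most care, is justifying $I_{N_i}(W_i)\subseteq N_i^0$ in step (2): one must check that if $g\in N_i$ satisfies $W_i^g\cong W_i$ as $N_i^0$-modules, then restricting this isomorphism to $C_i$ forces $g$ to stabilize $\chi_{v_i,\lm}|_{C_i}$, so $g\in I_{N_i}(\chi_{v_i,\lm}|_{C_i})=N_i^0$ by Proposition~\ref{po}. This uses only that $C_i\unlhd N_i$ and that $C_i$ acts on $W_i$ precisely through $\chi_{v_i,\lm}|_{C_i}$, so it is short, but it is the one place where Proposition~\ref{po} is actually used rather than being a black box. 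Everything else is a direct citation of Theorem~\ref{cl2}.
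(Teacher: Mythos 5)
Your argument is correct and is exactly the expansion of the paper's terse citation: the paper also applies the ``in particular'' clause of Theorem~\ref{cl2} with $N_i^0\unlhd N_i$ and $W=W_i$, deduces $I_{N_i}(W_i)=N_i^0$ by restricting to $C_i$ and invoking Proposition~\ref{po}, and gets normality from Corollary~\ref{nors4}. Your initial detour through $N=C_i$ is quickly and correctly discarded, and the final three-step summary matches the paper's intended proof.
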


\begin{proof} This follows from Theorem \ref{cl2}, Corollary \ref{nors4} and Proposition \ref{po}.
\end{proof}

\begin{cor}\label{po2} Suppose $\lm:A^+\to F^*$ is a left admissible linear character and let $1\leq i\leq n$. Then the inertia
group of $\chi_{v_i,\lm}|_C$ in $P$ is $C\rtimes L_{v_i}$.
\end{cor}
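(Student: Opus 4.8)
The plan is to prove separately the two inclusions defining the inertia group $I=I_P(\chi_{v_i,\lm}|_C)$. For the inclusion $C\rtimes L_{v_i}\subseteq I$, recall that $I$ automatically contains $C$; and if $g\in L_{v_i}$ then $gv_i=v_i$, so (\ref{keko}) gives ${}^g\chi_{v_i,\lm}=\chi_{gv_i,\lm}=\chi_{v_i,\lm}$ as characters of $C$. Since $L\subseteq T$ normalizes $C$, the set $C\rtimes L_{v_i}$ is a subgroup of $P$ and is contained in $I$.

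For the reverse inclusion, since $C\subseteq I$ and $P=C\rtimes L$, it suffices to show that every $g\in L$ fixing $\chi_{v_i,\lm}|_C$ lies in $L_{v_i}$. Such a $g$ acts on $N=\langle v_1,\dots,v_n\rangle$ through a lower unitriangular matrix (this is the description of $L$ via (\ref{xy})), so $gv_i=v_i+v_{i+1}c_{i+1}+\cdots+v_nc_n$ for some $c_j\in A$, and I must show $c_j=0$ for every $j>i$. The idea is to run the calculation from the proof of Proposition \ref{po} in this setting. Fix $j$ with $i<j\le n$; for $a\in A$ let $g_a\in C$ be given by $g_av_i=v_i+u_ja^*$, $g_av_j=v_j+u_ia$, and $g_av_k=v_k$ for $k\ne i,j$. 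These $g_a$ belong to $C$ (in fact to $C_i$), so the relation ${}^g\chi_{v_i,\lm}=\chi_{v_i,\lm}$ may be evaluated at them, and the sesquilinear computation done in Proposition \ref{po}, together with (\ref{keko}), yields $\chi_{v_i,\lm}(g_a)=1$ while ${}^g\chi_{v_i,\lm}(g_a)=\chi_{gv_i,\lm}(g_a)=\lm(ac_j+(ac_j)^*)=\lm^{\sharp}(ac_j)$. Equating these for all $a\in A$ makes $\lm^{\sharp}$ trivial on the left ideal $Ac_j$, and left admissibility of $\lm$ then forces $Ac_j=(0)$, so $c_j=0$. As $j$ ranges over $\{i+1,\dots,n\}$ we obtain $gv_i=v_i$, i.e.\ $g\in L_{v_i}$, as required.

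I do not expect a genuine obstacle beyond what was already handled for Proposition \ref{po}, since the core identity is the same; the only point requiring care is the bookkeeping. Proposition \ref{po} evaluates $\chi_{v_i,\lm}$ only on $C_i$ and only against elements of $N_i$, whereas here restricting to all of $C$ allows $g$ to range over the whole of $L$ and $j$ over all of $\{i+1,\dots,n\}$, while the auxiliary elements $g_a$ still lie in $C$; it is precisely this extra freedom that upgrades Proposition \ref{po} to the present statement about inertia in $P$.
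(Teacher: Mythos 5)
Your proof is correct and takes essentially the same approach as the paper, which declares the corollary an immediate consequence of Proposition \ref{po}: the containment $C\rtimes L_{v_i}\subseteq I_P(\chi_{v_i,\lm}|_C)$ comes from (\ref{keko}), and the reverse containment is forced by exactly the computation with the elements $g_a\in C_i$ used in Proposition \ref{po}, which you have correctly transplanted to the setting $g\in L$. (The paper's one-line proof implicitly uses the factorization $L=B_iL_{v_i}$ so as to quote Proposition \ref{po} verbatim, whereas you rerun the $g_a$ calculation directly; the substance is identical.)
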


\begin{proof} Immediate consequence of Proposition \ref{po}.
\end{proof}

\begin{theorem}\label{irrg} Suppose $\lm:A^+\to F^*$ is a left admissible linear character and let $1\leq i\leq n$.
Extend the action of $N_i^0$ on $W_i$ to $C\rtimes L_{v_i}$ by letting $C\rtimes L_{v_i}$ act on $W_i$ via $\chi_{v_i,\lm}$. Then
$$
V_{i,\lm}=\ind_{C\rtimes L_{v_i}}^{P} W_i
$$
is an irreducible $P$-module satisfying $(V_{i,\lm},V_{i,\lm})_{P}=1$.
\end{theorem}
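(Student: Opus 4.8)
The plan is to deduce Theorem \ref{irrg} from the inductive construction already in place, combined with the Clifford-theoretic machinery of \S\ref{clio}. Since $V_{i,\lm}=\ind_{C\rtimes L_{v_i}}^P W_i$, by Theorem \ref{cl2} it suffices to exhibit a normal subgroup $M\unlhd P$ lying between $C$ and $C\rtimes L_{v_i}$, an irreducible $M$-module lying over $\chi_{v_i,\lm}|_C$, and then check that $C\rtimes L_{v_i}$ realises the full inertia group at the appropriate stage. The natural choice is $M=N_i$: by Corollary \ref{nors4} it is normal in $P$, and Theorem \ref{irrn} already tells us that $U_{i,\lm}=\ind_{N_i^0}^{N_i}W_i$ is irreducible over $N_i$ with $(U_{i,\lm},U_{i,\lm})_{N_i}=1$.

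First I would verify that $U_{i,\lm}$ lies over $\chi_{v_i,\lm}|_{C_i}$, or rather over its restriction to whatever subgroup of $C$ meets $N_i$; more precisely, I would identify the irreducible $C$-module underneath. The key point is that $C_i\unlhd P$ (Lemma \ref{nors}) and $U_{i,\lm}$, being induced from the linear character $\chi_{v_i,\lm}|_{N_i^0}$, has $C_i$ acting by the single character $\chi_{v_i,\lm}|_{C_i}$ (since $C_i\subseteq N_i^0$ is central-acting and the inducing subgroup contains $C_i$). Then I would compute the inertia group of $\chi_{v_i,\lm}|_{C}$ in $P$: by Corollary \ref{po2} this is exactly $C\rtimes L_{v_i}$. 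So $I_P(W)=C\rtimes L_{v_i}$ for the relevant irreducible $C$-module $W$, and the first part of Theorem \ref{cl2} applies provided I produce an irreducible $(C\rtimes L_{v_i})$-module over $W$. But $W_i$ itself, extended to $C\rtimes L_{v_i}$ via $\chi_{v_i,\lm}$, is such a module, being one-dimensional hence irreducible. Therefore $\ind_{C\rtimes L_{v_i}}^P W_i$ is irreducible by Theorem \ref{cl2}.

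For the endomorphism statement $(V_{i,\lm},V_{i,\lm})_P=1$ I would invoke the second half of Theorem \ref{cl2}: once we know the inertia group of the underlying irreducible $C$-module $W$ in $P$ is exactly $N=C\rtimes L_{v_i}$, we have that $\ind_N^P W_i$ is irreducible and, since $(W_i,W_i)_{C\rtimes L_{v_i}}=1$ (it is one-dimensional), the theorem gives $(V_{i,\lm},V_{i,\lm})_P=1$. Here I must take care that the hypothesis of Theorem \ref{cl2}'s second assertion—namely $I_P(W)=C\rtimes L_{v_i}$ with $W$ irreducible over $C$ and $(W,W)_C=1$—is literally the situation just established; $(W,W)_C=1$ holds automatically as $C$ is abelian so its irreducibles are one-dimensional.

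\textbf{Main obstacle.} The delicate point is bookkeeping around which normal subgroup and which irreducible module one feeds into Theorem \ref{cl2}. The character $\chi_{v_i,\lm}$ is defined only on $C\rtimes L_{v_i}$, not on all of $P$, and $L_{v_i}$ need not be normal in $L$; one cannot simply say ``$\chi_{v_i,\lm}|_C$ extends to $P$''. The correct framing is: let $W$ be the one-dimensional $C$-module $\chi_{v_i,\lm}|_C$, observe $C\unlhd P$, apply Corollary \ref{po2} to get $I_P(W)=C\rtimes L_{v_i}$, note $W_i$ is an irreducible module of $C\rtimes L_{v_i}$ lying over $W$, and conclude by Theorem \ref{cl2}. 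Everything else is routine; the only genuine content, namely that the inertia group is no larger than $C\rtimes L_{v_i}$, has been isolated into Proposition \ref{po} and Corollary \ref{po2} and may be quoted.
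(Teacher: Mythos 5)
Your ``Main obstacle'' paragraph is in fact the correct proof and agrees with the paper's intent: take $N=C\unlhd P$, let $W=\chi_{v_i,\lm}|_C$ (a one-dimensional, hence irreducible, $C$-module), note that $I_P(W)=C\rtimes L_{v_i}$ by Corollary~\ref{po2}, observe that $W_i$ with the $\chi_{v_i,\lm}$-action is an irreducible $(C\rtimes L_{v_i})$-module lying over $W$, and conclude from the first assertion of Theorem~\ref{cl2} that $V_{i,\lm}=\ind_{C\rtimes L_{v_i}}^{P} W_i$ is irreducible. That is the proof.

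Two points in the earlier part of your write-up, however, do not hold up and should be discarded. First, the opening suggestion to take ``a normal subgroup $M\unlhd P$ lying between $C$ and $C\rtimes L_{v_i}$'' with ``natural choice $M=N_i$'' is wrong: $N_i=C_i\rtimes L_i$ is not contained in $C\rtimes L_{v_i}$, since $L_i$ does not fix $v_i$ (indeed $L_i\cap L_{v_i}$ is strictly smaller than $L_i$). Moreover, even if it were, $W_i$ is one-dimensional while $U_{i,\lm}$ has dimension $|A|^{n-i}$, so $W_i$ cannot lie over $U_{i,\lm}$ unless $i=n$. This detour does not lead to the statement and is best removed. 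Second, for $(V_{i,\lm},V_{i,\lm})_P=1$ you invoke the ``second half of Theorem~\ref{cl2}'' with $N=C\rtimes L_{v_i}$; but that assertion presupposes $N\unlhd G$ and $I_G(W)=N$, whereas $C\rtimes L_{v_i}$ is not normal in $P$ in general ($L_{v_i}$ need not be normal in $L$). The clean way to get the endomorphism statement is the Frobenius reciprocity computation underlying the proof of Theorem~\ref{cl2}: $\End_P(\ind_T^P S)\cong\Hom_T(S,\res_T\ind_T^P S)$, and since $C\unlhd P$ with $I_P(W)=T$, the $W$-isotypic part of $\res_C V_{i,\lm}$ is a single copy, forcing $\Hom_T(W_i,\res_T V_{i,\lm})\cong\End_T(W_i)=F$. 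That yields $(V_{i,\lm},V_{i,\lm})_P=1$ without any claim of normality for $C\rtimes L_{v_i}$.
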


\begin{proof} Immediate consequence of Theorem \ref{cl2}, Corollary \ref{nors4} and Corollary \ref{po2}.
\end{proof}

\begin{prop}\label{restri} Suppose $\lm:A^+\to F^*$ is a left admissible linear character and let $1\leq i\leq n$. Then
$$
\res^P_{N_i} V_{i,\lm}\cong U_{i,\lm}.
$$
\end{prop}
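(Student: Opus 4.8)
The plan is to compare the two induced modules $\res^P_{N_i} V_{i,\lm}$ and $U_{i,\lm} = \ind_{N_i^0}^{N_i} W_i$ directly, using a Mackey-type decomposition of the restriction of an induced module. Recall $V_{i,\lm} = \ind_{C\rtimes L_{v_i}}^{P} W_i$. To restrict this to $N_i$, I would invoke the version of Mackey's subgroup theorem valid for modules of arbitrary dimension (available from \cite{S}, in the spirit of the other results imported in \S\ref{clio}), which expresses $\res^P_{N_i}\ind_{C\rtimes L_{v_i}}^{P} W_i$ as a direct sum indexed by the $(N_i, C\rtimes L_{v_i})$-double cosets in $P$, each summand being an induced module from the appropriate intersection. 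So the first real step is a combinatorial one: identify these double cosets and show there is essentially only one contributing, namely the one corresponding to the identity.

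For the double coset computation, I would use the semidirect structure $P = C\rtimes L$ together with the fact that $N_i = C_i\rtimes L_i$ and $C\rtimes L_{v_i} \supseteq N_i^0 = C_i\rtimes L_{i-1}$. Since $C$ is abelian and normal, and since $L = \bigcup L_i$ has a filtration compatible with the $C_i$, the relevant double cosets $N_i \backslash P / (C\rtimes L_{v_i})$ should be parametrized by coset representatives that move $v_i$ around; concretely, an element of $L$ not in $L_{v_i}$ sends $v_i$ to $v_i$ plus a combination of later $v_j$'s, and I expect a transversal of $L_{v_i}$ in $L$ that can be chosen inside the complement to $L_i$, so that $N_i$ absorbs it and the double coset collapses. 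The upshot I am aiming for: $P = N_i\cdot(C\rtimes L_{v_i})$, i.e. there is a single double coset. Then Mackey gives $\res^P_{N_i} V_{i,\lm} \cong \ind_{N_i\cap (C\rtimes L_{v_i})}^{N_i} W_i$, and one computes $N_i\cap(C\rtimes L_{v_i}) = N_i^0$ (since $L_i\cap L_{v_i} = L_{i-1}$, because an element of $L_i$ fixing $v_i$ and already fixing $v_{i+1},\dots,v_n$ lies in $L_{i-1}$), whence the right-hand side is exactly $U_{i,\lm}$, with the $N_i^0$-action by $\chi_{v_i,\lm}|_{N_i^0}$ matching on the nose.

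An alternative, perhaps cleaner, route avoids Mackey entirely: since $U_{i,\lm}$ is irreducible over $N_i$ by Theorem~\ref{irrn}, and $V_{i,\lm}$ is irreducible over $P$ by Theorem~\ref{irrg}, it suffices to produce a nonzero $N_i$-homomorphism $U_{i,\lm}\to \res^P_{N_i}V_{i,\lm}$, or equivalently to show $\res^P_{N_i}V_{i,\lm}$ contains a submodule isomorphic to $U_{i,\lm}$ and has the right dimension. By transitivity of induction, $V_{i,\lm} = \ind_{C\rtimes L_{v_i}}^P W_i$ and $N_i^0 \le C\rtimes L_{v_i}$, so $W_i$ embeds $N_i^0$-equivariantly in $V_{i,\lm}$; by Frobenius reciprocity (cf. \cite[Theorem 5.3]{S}) this yields a nonzero map $U_{i,\lm} = \ind_{N_i^0}^{N_i}W_i \to \res^P_{N_i}V_{i,\lm}$, which is injective since $U_{i,\lm}$ is irreducible. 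Finally I would check the dimensions agree: $\dm_F V_{i,\lm} = [P : C\rtimes L_{v_i}] = [L : L_{v_i}] = |A|^{n-i}$ (the orbit of $v_i$ under $L$), while $\dm_F U_{i,\lm} = [N_i : N_i^0] = [L_i : L_{i-1}] = |A|^{n-i}$ as well, using that $B_i \cong A^{n-i}$ in the notation of Proposition~\ref{po}. Since both are equal (finite or infinite, with the convention that an injection between modules of the same finite dimension is an isomorphism, and in the infinite case one argues via the explicit basis of coset representatives), the injection is an isomorphism.

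The main obstacle I anticipate is the double-coset / transversal bookkeeping: one must pin down precisely that $L = L_{v_i}\cdot(\text{transversal})$ with the transversal sitting inside $L_i$, equivalently that $P = N_i\cdot(C\rtimes L_{v_i})$, and that the intersection is exactly $N_i^0$. This is where the specific matrix descriptions of $L_i$, $L_{v_i}$ and $B_i$ from \S\ref{int} and Proposition~\ref{po} must be combined carefully; everything else is a formal consequence of Frobenius reciprocity, transitivity of induction, and the irreducibility statements already proved. Using the second route sidesteps the full Mackey decomposition and reduces the delicate part to the single dimension count $[L:L_{v_i}] = [L_i:L_{i-1}]$, which I would verify by exhibiting the common transversal $\{g\in B_i\}$ explicitly.
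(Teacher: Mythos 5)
Your proposal is correct, and both routes you sketch rest on the same combinatorial fact that the paper's proof uses: the set $B_i$ from Proposition \ref{po} is simultaneously a transversal for $N_i^0$ in $N_i$ and for $C\rtimes L_{v_i}$ in $P$. The paper simply states this observation and declares the result immediate, since the common transversal makes $\ind_{N_i^0}^{N_i}W_i$ and $\res^P_{N_i}\ind_{C\rtimes L_{v_i}}^P W_i$ literally the same $F$-space with the same $N_i$-action on basis vectors $b\otimes w_i$, $b\in B_i$. Your first route repackages this as a Mackey subgroup decomposition, which requires verifying $P=N_i\cdot(C\rtimes L_{v_i})$ (single double coset) and $N_i\cap(C\rtimes L_{v_i})=N_i^0$; both checks are correct, with $L_i\cap L_{v_i}=L_{i-1}$ being the key intersection. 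Your second route, via Frobenius reciprocity plus a comparison of transversals, is closest in spirit to the paper, but note that the "injection plus dimension count" framing is weaker in the infinite case than the direct identification: as you observe yourself, one must in the end fall back on the explicit common transversal, at which point the Frobenius-reciprocity preamble is superfluous and one may as well write down the paper's one-line isomorphism directly. Either of your routes is valid; the paper's version is simply the minimal distillation of the second one, bypassing Mackey and Frobenius entirely once the common transversal is in hand.
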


\begin{proof} We readily see that a set of representatives for the cosets of $N_i^0$ in $N_i$ (e.g.
the abelian group $B_i$ used in the proof of Proposition \ref{po}) is also
a set of representatives for the left cosets of $C\rtimes L_{v_i}$ in $P$. The result follows.
\end{proof}

\begin{note}\label{degr}
{\rm The above modules have dimension $|A|^{n-i}$ (whether $A$ is
finite or not). }
\end{note}

\begin{prop}\label{uy} Suppose $\lm:A^+\to F^*$ is a left admissible linear character and $1\leq i<j\leq n$.
Then $N_i$ acts trivially on $V_{j,\lm}$.
\end{prop}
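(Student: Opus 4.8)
The plan is to unwind the definition of $V_{j,\lm}$ as an induced module and show that $N_i$ acts trivially on a spanning set. Recall $V_{j,\lm}=\ind_{C\rtimes L_{v_j}}^{P} W_j$, so as an $F$-vector space $V_{j,\lm}=\bigoplus_g g\otimes W_j$, where $g$ runs over a transversal of $C\rtimes L_{v_j}$ in $P$; by the proof of Proposition \ref{restri} we may take the transversal to be $B_j$, the subgroup of $L_j$ fixing all $v_k$ with $k\neq j$. So it suffices to show that for every $x\in N_i$ and every $b\in B_j$ we have $x\cdot(b\otimes w_j)=b\otimes w_j$, i.e. that $xb=b c$ for some $c\in C\rtimes L_{v_j}$ acting trivially on $W_j$, equivalently that $b^{-1}xb\in C\rtimes L_{v_j}$ and lies in the kernel of the extended character $\chi_{v_j,\lm}$.

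First I would record that since $i<j$, we have $N_i=C_i\rtimes L_i\subseteq C_j\rtimes L_{j-1}=N_j^0\subseteq C\rtimes L_{v_j}$ (using $i\le j-1$, so $L_i\subseteq L_{j-1}\subseteq L_{v_j}$, and $C_i\subseteq C_j\subseteq C$). Moreover $B_j\subseteq L_j$ normalizes the normal subgroup $N_i=C_i\rtimes L_i$ of $P$? Not quite—$L_i$ need not be normal in $P$—but $C_i$ is normal in $P$ by Lemma \ref{nors}, and $N_i^0=C_i\rtimes L_{i-1}$ is normal in $P$ by Corollary \ref{nors4}; since $i\le j-1$ we actually have $L_i\subseteq L_{j-1}$, so $N_i\subseteq N_j^0=C_i'\rtimes L_{j-1}$ where $C_i'=C_j$, and $N_j^0$ is normal in $P$. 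Hence $b^{-1}xb\in N_j^0\subseteq C\rtimes L_{v_j}$ for all $x\in N_i$, $b\in B_j\subseteq P$. So the containment issue is automatic and the real content is the character computation: I must show $\chi_{v_j,\lm}(b^{-1}xb)=1$ for all $x\in N_i$.

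The key step is therefore the following computation. Because $\chi_{v_j,\lm}$ restricted to $N_j^0$ is a homomorphism and $N_i\subseteq N_j^0$, and because $b\in L_j$ stabilizes $\chi_{v_j,\lm}|_{C_j}$ only up to the discrepancy measured in Proposition \ref{po}, I would instead argue directly from formula (\ref{ch}): for $x\in N_i$ write its $C$-component as $x_C\in C_i$ and its $L$-component as $x_L\in L_i$; since $\chi_{v_j,\lm}$ is trivial on $L_{v_j}\supseteq L_i$, only $x_C\in C_i\subseteq H_i$ matters. Now $\chi_{v_j,\lm}(b^{-1}x_C b)=\lm\bigl(h((b^{-1}x_C b)v_j,\,v_j)\bigr)$, and since $b\in B_j$ fixes every $v_k$ with $k\neq j$ while $b^{-1}x_C b\in C_j$, this reduces (using that $x_C\in C_i$ only moves vectors into $\langle u_1,\dots,u_i\rangle$ and that the relevant $u_k$ with $k\le i<j$ are $h$-orthogonal to $N$ in the needed components) to $\lm$ of an expression built from the $(k,\ell)$-entries of the matrix $S$ of $x_C$ with $k,\ell\le i$, paired against coefficients of $bv_j$; but $bv_j\in v_j+\langle v_{j+1},\dots,v_n\rangle$ lies in $N$, and $h(M,N)$-pairings here land against $u_k$, $k\le i$, which against $v_j+\sum_{m>j}v_m c_m$ contribute $h(u_k,v_j)=\delta_{kj}=0$ since $k\le i<j$, and $h(u_k,v_m)=\delta_{km}=0$ since $k\le i<j<m$. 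Thus every term vanishes and $\chi_{v_j,\lm}(b^{-1}xb)=\lm(0)=1$, which is exactly what we need; running this for all $x\in N_i$ and all transversal elements $b\in B_j$ gives triviality of the $N_i$-action on all of $V_{j,\lm}$.

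The main obstacle I anticipate is the bookkeeping in that last computation: one must track carefully how conjugation by $b\in B_j$ (which mixes $v_j$ with later $v_m$'s) interacts with an element of $C_i$ (whose matrix $S\in H_i$ has support only in the first $i$ rows/columns), and confirm that the surviving $h$-pairings are all of the form $h(u_k,v_m)$ with $k\le i<j\le m$ (or $m=j$), hence all zero because the Gram matrix is $J$. Once the index inequalities $k\le i<j\le m$ are pinned down this is immediate, but it is worth writing out explicitly. No appeal to admissibility of $\lm$ is needed here—the statement is purely structural—so I would not invoke Proposition \ref{po}; the cleaner route is the direct evaluation of (\ref{ch}) just sketched.
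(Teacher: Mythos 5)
Your proposal is correct, but it takes a genuinely different route from the paper's. The paper's proof is shorter and more conceptual: it observes that $N_i$ is normal in $P$ (Corollary~\ref{nors4}) and $V_{j,\lm}$ is irreducible (Theorem~\ref{irrg}), so the fixed-point subspace $V_{j,\lm}^{N_i}$ is a $P$-submodule, and one only needs to exhibit a \emph{single} nonzero fixed vector. It then checks the generating vector $w_j$ of $U_{j,\lm}\subseteq V_{j,\lm}$: for $g=xy\in N_i$ with $x\in C_i$, $y\in L_i$, one has $yv_j=v_j$ and $xv_j=v_j+z$ with $z\in\langle u_1,\dots,u_i\rangle$, whence $\lm(h(gv_j,v_j))=1$, and the argument is done in one line. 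You instead verify triviality on the full spanning set $\{b\otimes w_j: b\in B_j\}$, conjugating through the transversal and showing $\chi_{v_j,\lm}(b^{-1}xb)=1$ by tracking that the displacement lands in $\langle u_1,\dots,u_i\rangle$ and pairs to zero against vectors in $N$ under $h$ because all relevant indices satisfy $k\le i<j\le m$. Your route buys you independence from irreducibility of $V_{j,\lm}$ (and hence, as you note, from left admissibility of $\lm$), so it proves a slightly more general statement; what the paper's approach buys is brevity and a cleaner one-vector computation, precisely by leveraging the Clifford-theoretic machinery it has already set up. Your observation that $N_i\subseteq N_j^0$ and that $b^{-1}x_Lb\in L_{v_j}$ for $x_L\in L_i$ (since $L_i$ fixes $v_j,\dots,v_n$ and hence $bv_j$) are the key structural facts that make your direct check work, and both are correct; the final bookkeeping, while somewhat compressed, can be cleaned up to exactly the $h(u_k,v_m)=\delta_{km}=0$ cancellations you identify.
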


\begin{proof} We know from Corollary \ref{nors4} that $N_i$ is normal in $P$ and from Theorem \ref{irrg}
that $V_{j,\lm}$ is an irreducible $P$-module. Thus, it suffices
to show that $N_i$ has a nonzero fixed point in $V_{j,\lm}$. On
the other hand, by Proposition \ref{restri}, $U_{j,\lm}$ is an
$N_j$-submodule of $V_{j,\lm}$. Since $N_i$ is contained in $N_j$,
it is enough to prove that $N_i$ has a nonzero fixed point in
$U_{j,\lm}$. From the definition of $U_{j,\lm}$ given in Theorem
\ref{irrn}, the existence of a nonzero fixed point for $N_i$ in
$U_{j,\lm}$ is ensured provided
\begin{equation}
\label{xg}
\lm(h(gv_j,v_j))=1,\quad g\in N_i.
\end{equation}
Let $g\in N_i$. Then $g=xy$, where $x\in C_i$ and $y\in L_i$. Obviously, $yv_j=v_j$, while
$xv_j=v_j+z$, with $z\in\langle u_1,\dots,u_i\rangle$, so (\ref{xg}) follows.
\end{proof}

\section{Irreducible modules arising from Gallagher's theorem}\label{t2}

\begin{theorem}\label{51} Let $D$ be any non-empty subset of $\{1,\dots,n\}$ and let $\lm$ be a choice function such that for any $i\in D$,
$\lm_i$ is a left admissible linear character $A^+\to F^*$. Then
$$
V(D,\lm)=\underset{i\in D}\bigotimes V_{i,\lm_i}
$$
is an irreducible $P$-module satisfying $(V(D,\lm),V(D,\lm))_P=1$.
\end{theorem}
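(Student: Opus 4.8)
The plan is to prove Theorem~\ref{51} by induction on $|D|$, using the chain of normal subgroups $N_i \unlhd P$ together with Gallagher's theorem in the form of Theorem~\ref{cli4}. Write $D = \{i_1 < i_2 < \cdots < i_r\}$ and set $d = i_1$, the smallest element. The base case $|D| = 1$ is precisely Theorem~\ref{irrg}. For the inductive step, I would single out the tensor factor $V_{d,\lm_d}$ and the complementary product $U = \bigotimes_{i \in D, i \neq d} V_{i,\lm_i}$, which by the inductive hypothesis is an irreducible $P$-module with $(U,U)_P = 1$.

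The key point is to apply Theorem~\ref{cli4} with $G = P$, $N = N_d$, $W = V_{d,\lm_d}$, and $U$ as above. Three hypotheses must be checked. First, $\res^P_{N_d} V_{d,\lm_d}$ is irreducible: this is exactly Proposition~\ref{restri}, which identifies it with $U_{d,\lm_d}$, together with Theorem~\ref{irrn}. Second, $(V_{d,\lm_d}, V_{d,\lm_d})_{N_d} = 1$: again immediate from Proposition~\ref{restri} and the $(U_{d,\lm_d},U_{d,\lm_d})_{N_d} = 1$ assertion of Theorem~\ref{irrn}. Third — and this is the crux — $N_d$ acts trivially on $U$. Here I invoke Proposition~\ref{uy}: since every index $i \in D$ with $i \neq d$ satisfies $i > d$, that proposition says $N_d$ acts trivially on each $V_{i,\lm_i}$, hence trivially on their tensor product $U$. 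Theorem~\ref{cli4} then yields that $U \otimes V_{d,\lm_d} = V(D,\lm)$ is an irreducible $P$-module.

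It remains to get $(V(D,\lm), V(D,\lm))_P = 1$. One way is to feed the output of Theorem~\ref{cli4} back through Theorem~\ref{cli2}: the irreducible module $V(D,\lm) = \ind$ of something, but cleaner is to argue directly. Take $T \in \End_P(U \otimes V_{d,\lm_d})$. Restricting to $N_d$, since $N_d$ acts trivially on $U$ and $\res_{N_d} V_{d,\lm_d}$ is irreducible with endomorphism algebra $F$, Lemma~\ref{formol} (applied with $N = N_d$, $U_1 = U_2 = U$, $W = V_{d,\lm_d}$) forces $T = S \otimes 1$ with $S \in \End_P(U)$. But $(U,U)_P = 1$ by the inductive hypothesis, so $S$ is a scalar, hence $T$ is a scalar. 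Thus $(V(D,\lm),V(D,\lm))_P = 1$, completing the induction.

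I expect the only real subtlety to be bookkeeping about \emph{which} normal subgroup to peel off: one must use the \emph{smallest} index $d$ of $D$ so that Proposition~\ref{uy}'s hypothesis ``$i < j$'' applies with $i = d$ and $j$ running over the remaining elements of $D$, guaranteeing that $N_d$ acts trivially on the complementary factor. Had one peeled off the largest index instead, this triviality would fail. Everything else is a mechanical assembly of the lemmas already in hand; there is no genuine obstacle, which is precisely the point of the remark in the introduction that Gallagher's theorem makes the irreducibility ``conceptually transparent and free of calculations.''
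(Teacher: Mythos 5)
Your proof matches the paper's almost exactly: the same induction on $|D|$, peeling off the smallest index $d$, invoking Corollary~\ref{nors4}, Theorem~\ref{irrn}, Proposition~\ref{restri}, Proposition~\ref{uy}, and Theorem~\ref{cli4} for irreducibility, and Lemma~\ref{formol} for the endomorphism claim. Your closing remark about why one must use the smallest index of $D$ is a correct and useful observation, but the approach itself is the paper's own.
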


\begin{proof} By induction on $|D|$. The case $|D|=1$ is proven in Theorem~\ref{irrg}. Suppose $|D|>1$
and the result is true for subsets of $\{1,\dots,n\}$ of
size~$<|D|$. Let $i$ be the smallest element of $D$ and set
$E=D\setminus\{i\}$. By Corollary \ref{nors4}, $N=N_i$ is normal
in $G=P$. Moreover, by Theorem~\ref{irrn} and Proposition
\ref{restri}, if $W=V_{i,\lm_i}$, then $\res_N^G W$ is irreducible
and $(W,W)_N=1$. By inductive hypothesis,
$$
U=\underset{j\in E}\bigotimes V_{j,\lm_j}
$$
is an irreducible $G$-module, which is acted upon trivially by
$N$, due to Proposition \ref{uy}. It follows from
Theorem~\ref{cli4} that $V(D,\lm)$ is an irreducible $G$-module.
Moreover, Lemma \ref{formol} implies that
$(V(D,\lm),V(D,\lm))_G=1$.
\end{proof}

\begin{theorem}\label{monom} Let $D$ be any non-empty subset of $\{1,\dots,n\}$ and let $\lm$ be a choice function such that for any $i\in D$,
$\lm_i$ is a left admissible linear character $A^+\to F^*$. Let $L_D$ be the pointwise stabilizer of $\{v_i\,|\, i\in D\}$ in $L$
and let
$$
M_D=C\rtimes L_D=C\rtimes \underset{i\in D}\bigcap L_{v_i}=\underset{i\in D}\bigcap (C\rtimes L_{v_i}).
$$
Let $\chi_{D,\lm}:M_D\to F^*$ be the linear character defined by
$$
\chi_{D,\lm}(g)=\underset{i\in D}\prod \chi_{v_i,\lm_i}(g)=\underset{i\in D}\prod \lm_i(h(gv_i,v_i)).
$$
Let $Z_D$ be a 1-dimensional $M_D$-module upon which $M_D$ acts via $\chi_D$. Then
$$
V(D,\lm)\cong \ind_{M_D}^P Z_D.
$$
\end{theorem}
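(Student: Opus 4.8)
The plan is to identify $V(D,\lm)$ with an induced module by an iterated application of the transitivity of induction, mirroring the inductive structure already used to prove irreducibility in Theorem~\ref{51}. First I would observe that each individual factor $V_{i,\lm_i}=\ind_{C\rtimes L_{v_i}}^P W_i$ is already monomial, and that the subgroups $C\rtimes L_{v_i}$, as $i$ ranges over $D$, all contain $C$ and intersect precisely in $M_D$, by the displayed identity for $M_D$ in the statement. The natural guess is therefore that $\bigotimes_{i\in D} V_{i,\lm_i}\cong\ind_{M_D}^P Z_D$, where $Z_D$ carries the product character $\chi_{D,\lm}=\prod_{i\in D}\chi_{v_i,\lm_i}$, and this is exactly what we must verify.

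The key computational step is a tensor-product-of-inductions lemma: if $H$ and $K$ are subgroups of $P$ both containing a common subgroup through which two one-dimensional modules $W$ and $W'$ extend compatibly, then $\ind_H^P W\otimes\ind_K^P W'$ decomposes, and under a transversality hypothesis on the double cosets it reduces to $\ind_{H\cap K}^P(W\otimes W')$. Rather than invoke a general Mackey decomposition, I would exploit the very concrete coset geometry at hand. Following the proof of Proposition~\ref{restri}, a transversal for $C\rtimes L_{v_i}$ in $P$ can be taken inside the abelian group $B_i$ of those elements of $L_i$ fixing all $v_j$ with $j\neq i$; more generally, writing $D=\{i_1<\cdots<i_r\}$ and peeling off the smallest index, the product $B_{i_1}B_{i_2}\cdots B_{i_r}$ (a set of elements of $L$ adjusting the coordinates $v_{i_1},\dots,v_{i_r}$ independently) serves simultaneously as a transversal for $M_D$ in $P$ and as a transversal compatible with the successive inductions. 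With this in hand, a dimension count gives $\dim\ind_{M_D}^P Z_D=[P:M_D]=\prod_{i\in D}|A|^{n-i}=\prod_{i\in D}\dim V_{i,\lm_i}=\dim V(D,\lm)$ (using Note~\ref{degr}), so it suffices to produce a nonzero $P$-homomorphism in one direction.

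The cleanest way to get that homomorphism, and the step I expect to carry the argument, is to use the universal property of induction together with the restriction computation. Restricting $V(D,\lm)$ to $M_D$, each tensor factor $V_{i,\lm_i}$ contains the $M_D$-stable line $Fw_i$ (the canonical generator of the induced module, fixed pointwise up to the scalar $\chi_{v_i,\lm_i}$ by $C\rtimes L_{v_i}\supseteq M_D$), so $\bigotimes_{i\in D} Fw_i$ is an $M_D$-submodule of $V(D,\lm)$ affording exactly $\chi_{D,\lm}$; by Frobenius reciprocity this yields a nonzero map $\ind_{M_D}^P Z_D\to V(D,\lm)$. Since the source is irreducible — it is $V(D,\lm)$ up to the isomorphism we are proving, but more directly one checks $\ind_{M_D}^P Z_D$ is irreducible by the same Clifford-theoretic induction as Theorem~\ref{51}, or simply notes any nonzero map from it has image of dimension at least $[P:M_D]=\dim V(D,\lm)$ — the map is an isomorphism. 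The main obstacle is bookkeeping: verifying that the $w_i$ are genuinely $M_D$-eigenvectors with the claimed eigencharacter requires unwinding the extension of $\chi_{v_i,\lm_i}$ from $N_i^0$ to $C\rtimes L_{v_i}$ and checking $M_D\subseteq C\rtimes L_{v_i}$ acts on $w_i$ by $\chi_{v_i,\lm_i}$, which follows because $M_D=C\rtimes\bigcap_{i\in D}L_{v_i}$ and $\chi_{v_i,\lm_i}$ is defined on all of $C\rtimes L_{v_i}$ by formula~(\ref{ch}); the transversality of $B_{i_1}\cdots B_{i_r}$ then makes the dimension match exact rather than merely an inequality.
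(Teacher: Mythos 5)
Your approach is genuinely different from the paper's, which proves the statement by a short induction on $|D|$: writing $V(D,\lm)=V_{i,\lm_i}\otimes V(E,\mu)$ for $E=D\setminus\{i\}$, it applies the Mackey tensor product theorem (from \cite[Theorem 2.1]{S}) to the two inductions from $M_E$ and $M_{\{i\}}$ after checking $M_EM_{\{i\}}=P$ (equivalently $L_EL_{v_i}=L$) and $M_E\cap M_{\{i\}}=M_D$. This yields the isomorphism directly, with no appeal to dimensions or to irreducibility of the induced module. You explicitly decline to invoke Mackey and instead locate the $M_D$-eigenline $\bigotimes_{i\in D}Fw_i$ inside $V(D,\lm)$, apply Frobenius reciprocity to obtain a nonzero $P$-map $\ind_{M_D}^P Z_D\to V(D,\lm)$, and then try to upgrade it to an isomorphism. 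The eigenline identification and the Frobenius step are both correct.

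The gap is in the upgrade. Surjectivity follows at once from the irreducibility of $V(D,\lm)$ (Theorem \ref{51}). For injectivity you offer a dimension count, $\dim\ind_{M_D}^P Z_D=[P:M_D]=\prod_{i\in D}|A|^{n-i}=\dim V(D,\lm)$, but this only closes the argument when $A$ is finite; the paper is explicitly built to handle infinite $A$, and a surjection between infinite-dimensional spaces of equal dimension need not be injective. Your fallback --- that $\ind_{M_D}^P Z_D$ is ``irreducible by the same Clifford-theoretic induction as Theorem~\ref{51}'' --- is not a small step and is not carried out: it would require showing that the inertia group in $P$ of the product character $\chi_{D,\lm}|_C=\prod_{i}\chi_{v_i,\lm_i}|_C$ is exactly $M_D$, and an element stabilizing a product of characters need not stabilize each factor, so Corollary \ref{po2} does not transfer automatically. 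What is missing is a dimensionality-independent reason the map is injective, which is precisely what the Mackey tensor product theorem supplies in one stroke; your transversal $B_{i_1}\cdots B_{i_r}$ and index computation are essentially the raw material for the hypothesis $M_EM_{\{i\}}=P$ of that theorem, so the cleanest repair is to assemble them into the Mackey argument rather than route through Frobenius reciprocity.
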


\begin{proof} By induction on $|D|$. If $D|=1$ there is nothing to do. Suppose $|D|>1$
and the result is true for subsets of $\{1,\dots,n\}$ of size~$<|D|$. Let $i$ be any element of $D$
and set $E=D\setminus\{i\}$. Let $\mu$ be the restriction of $\lm$ to $E$ and set
$$
V(E,\mu)=\underset{j\in E}\bigotimes V_{j,\lm_j}.
$$
Then
$$
V(D,\lm)=V_{i,\lm_i}\otimes V(E,\mu).
$$
By inductive hypothesis, $V(E,\mu)\cong \ind_{M_E}^P T$, where $T=Ft$ is acted upon $M_E$ via $\chi_{E,\mu}$.
Since $L_EL_{v_i}=L$ and $L_E\cap L_{v_i}=L_D$, we obtain $M_EM_{\{i\}}=P$ and $M_E\cap M_{\{i\}}=M_D$, so
Mackey Tensor Product Theorem (cf. \cite[Theorem 2.1]{S}) yields $V(D,\lm)\cong\ind_{M_D}^P Z_D$.
\end{proof}

\begin{theorem}\label{53} Let $D$ (resp. $D'$) be any non-empty subset of $\{1,\dots,n\}$ and let $\lm$ (resp. $\lm'$) be a choice function such that for any $i\in D$ (resp. $i\in D'$),
$\lm_i$ (resp. $\lm'_i$) is a left admissible linear character $A^+\to F^*$. Suppose the map $\Lambda:A\to R$ given by $a\mapsto a+a^*$
is surjective. Then
$$
V(D,\lm)\cong V(D',\lm')\Leftrightarrow D=D'\text{ and }\lm_i|_R=\lm'_i|_R\text{ for all }i\in D.
$$
\end{theorem}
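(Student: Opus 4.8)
The plan is to prove the two implications separately. The easy direction ($\Leftarrow$) should follow by showing that the module $V_{i,\lm}$ depends on $\lm$ only through $\lm|_R$. Indeed, looking back at the construction, $V_{i,\lm}=\ind_{C\rtimes L_{v_i}}^P W_i$ where $C\rtimes L_{v_i}$ acts on $W_i$ via $\chi_{v_i,\lm}(x)=\lm(h(xv_i,v_i))$, and by Lemma~\ref{ula} the quantity $h(xv_i,v_i)$ always lies in $R$. Hence $\chi_{v_i,\lm}$, and therefore $V_{i,\lm}$, depends only on $\lm|_R$; taking tensor products gives $V(D,\lm)\cong V(D,\lm')$ whenever $D=D'$ and $\lm_i|_R=\lm'_i|_R$ for all $i\in D$.

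For the hard direction ($\Rightarrow$), the plan is to recover the invariants $D$ and the restrictions $\lm_i|_R$ from the abstract isomorphism class of $V(D,\lm)$ by restricting to the normal subgroups $C_i$ and the normal subgroups $N_i$. First I would recover $D$. Using the chain $1=C_0\subset C_1\subset\cdots\subset C_n=C$ and the fact (from Proposition~\ref{uy} and the constructions in \S\ref{t1}) that $N_i$ acts trivially on $V_{j,\lm_j}$ when $j>i$ but nontrivially (in a controlled way) when $j\leq i$, I can identify, for each $i$, whether $i\in D$ by testing how $C_i/C_{i-1}$ acts on $V(D,\lm)$; more precisely the set $D$ should be characterised as $\{i : C_i \text{ does not act the same way as } C_{i-1}\text{ on }V(D,\lm)\}$, or equivalently in terms of which $N_i$ fix a larger subspace. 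Since an abstract isomorphism $V(D,\lm)\cong V(D',\lm')$ of $P$-modules carries these $C_i$-isotypic data to each other, it forces $D=D'$.

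Once $D=D'$ is established, the plan is to pin down $\lm_i|_R$ for each $i\in D$. Here I would use Theorem~\ref{monom}: $V(D,\lm)\cong\ind_{M_D}^P Z_D$ where $M_D=C\rtimes L_D$ acts via $\chi_{D,\lm}=\prod_{i\in D}\chi_{v_i,\lm_i}$. The restriction of $V(D,\lm)$ to $C$ is then a sum of $C$-characters of the form $\chi_{D,\lm}^{\,g}$ for $g$ running over coset representatives of $M_D$ in $P$, i.e. the $C$-characters $x\mapsto \prod_{i\in D}\lm_i(h(gv_i,v_i))$. Restricting further to $C_i$ and using that $h(xv_i,v_i)$ for $x\in C_i$ ranges over all of $R$ when $\Lambda:a\mapsto a+a^*$ is surjective (this is exactly where the surjectivity hypothesis enters: it guarantees $\{h(xv_i,v_i):x\in C_i\}=R$ rather than a proper subgroup, so that $\lm_i|_R$ is genuinely visible in the character and not washed out), I can isolate the character $\lm_i|_R$ as one of the constituents appearing in $\res^P_{C_i}V(D,\lm)$ that is ``new at level $i$.'' Comparing constituents on the two sides of the isomorphism then yields $\lm_i|_R=\lm'_i|_R$ for every $i\in D$.

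**Main obstacle.** The routine parts are the $\Leftarrow$ direction and the bookkeeping with the chains $C_i$ and $L_i$. The delicate step is the forward direction: extracting $D$ and the individual restrictions $\lm_i|_R$ from the $C$-module (or $N_i$-module) structure of the monomial module $\ind_{M_D}^P Z_D$ without ambiguity. The crux is to show that the map $i\mapsto \lm_i|_R$ is recoverable level-by-level from the way $C_i$ acts on $V(D,\lm)$ relative to $C_{i-1}$, and to verify that the coset representatives $g$ of $M_D$ in $P$ do not permute the ``$i$-th slot'' characters $\lm_i|_R$ among different $i$'s in a way that could confuse the invariants of $D$ with those of $D'$; controlling this requires keeping careful track of how $L$ acts on the $v_i$ and invoking the surjectivity of $\Lambda$ at precisely the right place. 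I expect this identification argument to be the main content of the proof, with everything else being formal consequences of \S\ref{t1}, \S\ref{t2} and Clifford theory.
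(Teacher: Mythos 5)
Your $\Leftarrow$ direction matches the paper exactly (it is the observation, via Lemma~\ref{ula}, that $h(xv_i,v_i)\in R$, so $\chi_{v_i,\lm}$ only sees $\lm|_R$). For the $\Rightarrow$ direction, however, there is a genuine gap. Your proposed characterization of $D$, namely $D=\{i : C_i\text{ does not act the same way as }C_{i-1}\text{ on }V(D,\lm)\}$, is left unproved, and as stated it does not appear to be correct: already for $D=\{1,3\}$ the group $C_2$ acts on the factor $V_{1,\lm_1}$ through the characters $x\mapsto\lm_1\bigl(\sum_{j,k}c_k^*S_{kj}c_j\bigr)$ (with $S\in H_2$ the hermitian matrix of $x\in C_2$ and $c=(1,c_2,\dots,c_n)^T$ indexing the $L$-conjugates of $v_1$), and these characters genuinely involve the entries $S_{2j}$, so $C_2$ does \emph{not} act the same way as $C_1$ even though $2\notin D$. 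In other words, the $C_i$-module structure of $V(D,\lm)$ changes at \emph{every} level $i$ greater than $\min D$, not just at levels belonging to $D$. The paper avoids this obstruction entirely: it only ever reads off the \emph{smallest} element of $D$ (where $C_{i-1}$ really does act trivially, by Proposition~\ref{uy}), pins down $\lm_i|_R$ at that level, then cancels the factor $V_{i,\lm_i}$ using Lemma~\ref{formol} and Theorem~\ref{cli4}, and recurses. You would need a correct ``all at once'' criterion for $D$ to rescue your approach, and none is supplied.

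A second, smaller, error: you place the surjectivity of $\Lambda$ at the wrong point. You claim it is needed to ensure $\{h(xv_i,v_i):x\in C_i\}=R$; but that equality is automatic (for $x\in C_i$ with hermitian matrix $S\in H_i$ one has $h(xv_i,v_i)=S_{ii}$, and $S_{ii}$ ranges freely over $R$ since any $S$ with $S_{ii}=r\in R$ and all other entries $0$ lies in $H_i$). In the paper the hypothesis is invoked only after establishing $\chi_{v_i,\lm_i}=\chi_{v_i,\lm'_i}$, to pass from agreement of $\lm_i$ and $\lm'_i$ on $\Lambda(A)$ (supplied by the test elements $g_a$ of display~(\ref{chot})) to agreement on $R$. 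Finally, the other delicate point you flag --- that $B_i$-conjugation cannot confuse $\chi_{v_i,\lm_i}$ with $\chi_{v_i,\lm'_i}$ --- is in the paper handled by rerunning the computation from Proposition~\ref{po}; your plan gestures at it but does not carry it out.
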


\begin{proof} Sufficiency is consequence of Lemma \ref{ula}. As for necessity, assume $V(D,\lm)\cong V(D',\lm')$. Let $i$ (resp. $i'$)
be the smallest element of $D$ (resp. $D'$).
Suppose, if possible, that $i<i'$. Then, by Proposition \ref{uy}, $N_i^0$ acts trivially on $V(D',\lm')$, while
$V(D,\lm)$ has a 1-dimensional $N_i^0$-submodule upon which $N_i^0$ acts via $\chi_{v_i,\lm_i}$. For $a\in A$, let $g_a\in C_i\subset N_i^0$
be defined by
$$
g_av_i=v_i+u_i(a+a^*),\; g_av_j=v_j, j\neq i.
$$
Then
\begin{equation}
\label{chot}
\chi_{v_i,\lm_i}(g_a)=\lm_i(h(g_av_i,v_i))=\lm_i(a+a^*).
\end{equation}
Since $\lm_i$ is left admissible, there is $a\in A$ such that $\chi_{v_i,\lm_i}(g_a)\neq 1$. This contradicts $V(D,\lm)\cong V(D',\lm')$.
Likewise we see the impossibility of $i'<i$. This shows $i=i'$.

It follows from Propositions \ref{restri} and \ref{uy} that $V(D,\lm)$ (resp. $V(D',\lm')$) is the direct sum of 1-dimensional $N_i^0$-submodules
upon which $N_i^0$ acts via $\chi_{v_i,\lm_i}$ (resp. $\chi_{v_i,\lm'_i}$) and its $B_i$-conjugates, where $B_i$ is as in
the proof of Propositions \ref{po}. The argument given in that proof shows that the only $B_i$-conjugate of $\chi_{v_i,\lm_i}$
that can possibly equal $\chi_{v_i,\lm'_i}$ is $\chi_{v_i,\lm_i}$ itself, so $\chi_{v_i,\lm_i}=\chi_{v_i,\lm'_i}$.
This and (\ref{chot}) imply that $\lm_i$ and $\lm'_i$ agree on the image of $\Lambda$. By hypothesis this image is $R$, so
$\lm_i|_R=\lm'_i|_R$.

Let $E$ and $\mu$ (resp. $E'$ and $\mu'$) be defined as in the proof of Theorem \ref{monom}. Then
$$
V(D,\lm)=V(E,\mu)\otimes V_{i,\lm_i},\quad
V(D',\lm')=V(E',\mu')\otimes V_{i,\lm_i},
$$
where $V(E,\mu)$ (resp. $V(E',\mu')$) is understood to be the
trivial $P$-module if $|D|=1$ (resp. $|D'|=1$). It follows from
Lemma \ref{formol}, Theorem \ref{irrn}, Proposition \ref{restri}
and Proposition \ref{uy} that $V(E,\mu)\cong V(E',\mu')$. The
above argument makes it clear that $|D|=1$ if and only if
$|D'|=1$. The result now follows by induction.
\end{proof}

\begin{note}{\rm By Note \ref{degr} and Theorem \ref{51}, if we take $D=\{1,\dots,n\}$ and vary the choice
function $\lm$ according to Theorem
\ref{53} we obtain distinct non-isomorphic irreducible
$P$-modules of degree $|A|^{n(n-1)/2}$. This is precisely the
index in $P$ of the normal abelian subgroup $C$. Thus, when $A$ is
finite, it follows from Ito's theorem that
$|A|^{n(n-1)/2}$ is the highest possible degree of an irreducible
complex $P$-module. }
\end{note}

\begin{note}{\rm Irreducible modules of Sylow $p$-subgroups of $\Sp_{2n}(q)$ and $\mathrm{U}_{2n}(q^2)$, where $q$ has characteristic $p$,
where also studied in \cite{Sz}. Even in this special case, the proofs in \cite{Sz} are more computational and sometimes incomplete,
specially that of \cite[Theorem 4.1]{Sz}, where only the case $D=\{1,\dots,n\}$ is considered.}
\end{note}

Given a ring $B$, we let $U(B)$ and $J(B)$ stand for the unit group and Jacobson radical of $B$. Note that if $B$ is local then either $2\in U(B)$ or $2\in J(B)$. It is clear that if $*=1_A$ and $2\in U(A)$ then $\Lambda$, as defined in Theorem \ref{53}, is surjective. For the case
$*\neq 1_A$ see Lemma \ref{quad} below.

\begin{lemma}\label{quad} Let $B$ be a ring having a left primitive linear character $\mu:B^+\to F^*$. Suppose that
either $2\in U(B)$ or $2\in J(B)$ and let $f(t)=t^2+ct+d\in B[t]$ be an arbitrary polynomial subject only to:

$\bullet$ $c$ and $d$ are central in $B$;

$\bullet$ $c=0$ and $d\in U(B)$ if $2\in U(B)$.

$\bullet$ $c\in U(B)$ if $2\in J(B)$.

Let $A=B[t]/(f(t))=B[\ga]$, where $\ga=t+(f(t))$. Then

(a) $A$ is a free (left and right) $B$-module with basis $\{1,\ga\}$.

(b) There is a unique involution $*$ of $A$, of order 2, extending $1_B$ and satisfying $$\ga^*=-(c+\ga).$$

(c) The linear character $\lm:A^+\to F^*$, given by
$$
\lm(a+b\ga)=\mu(a),\quad a,b\in B,
$$
is left admissible.

(d) The map $\Lambda:A\to B$, given by $\alpha\mapsto  \al+\al^*$, is surjective.
\end{lemma}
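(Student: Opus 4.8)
The plan is to settle (a) by the division algorithm and then work throughout in the $B$-coordinates $a+b\gamma$ ($a,b\in B$) it provides. For (a): since $f$ is monic of degree $2$ and $c,d$ are central, $(f(t))=f(t)B[t]=B[t]f(t)$ is a two-sided ideal, and the division algorithm shows every element of $B[t]$ is congruent modulo $(f(t))$ to a unique $a+bt$ with $a,b\in B$; hence $A$ is free with basis $\{1,\gamma\}$ both as a left and as a right $B$-module, which is (a). Moreover $\gamma$ is central in $A$ (as $t$ is central in $B[t]$ and $(f(t))$ is two-sided), and $\gamma^2=-c\gamma-d$.

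Part (b) already forces $B$ to be commutative: any anti-automorphism of $A$ fixing the subring $B$ pointwise satisfies $b_1b_2=(b_1b_2)^*=b_2^*b_1^*=b_2b_1$. Granting this (so the centrality assumptions become automatic), I would define $\sigma\colon A\to A$ on the basis by $\sigma(a+b\gamma)=a-bc-b\gamma$ and verify by a direct computation — or by noting that $-(c+\gamma)$ is again a root of $f$, so the universal property of $B[t]/(f(t))$ applies — that $\sigma$ is a ring endomorphism of $A$ (equivalently, since $A$ is commutative, an anti-endomorphism) fixing $B$ pointwise, sending $\gamma$ to $-(c+\gamma)$, and satisfying $\sigma^2=1_A$; uniqueness is clear because $\{1,\gamma\}$ generates $A$ over $B$. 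So $*:=\sigma$ proves (b). For later use I record that, writing $\alpha=a+b\gamma$,
$$\alpha^*=a-bc-b\gamma,\qquad \alpha+\alpha^*=2a-bc\in B,\qquad \gamma\alpha=-bd+(a-bc)\gamma,$$
and that the discriminant $c^2-4d$ is a central unit of $B$: if $2\in U(B)$ then $c=0$ and $d\in U(B)$, so $c^2-4d=-4d\in U(B)$; if $2\in J(B)$ then $c\in U(B)$, so $c^2\in U(B)$ while $4d\in J(B)$, and a unit plus an element of $J(B)$ is a unit. The two hypotheses on $(c,d)$ exist precisely to guarantee this.

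For (c): first, $\lambda=\mu\circ\pi$ where $\pi\colon A^+\to B^+$ is the additive projection $a+b\gamma\mapsto a$, so $\lambda$ and hence $\lambda^\sharp$ are linear characters and $\lambda|_B=\mu$. I would prove $\ker\lambda^\sharp$ contains no nonzero left ideal by showing $A\alpha\not\subseteq\ker\lambda^\sharp$ whenever $\alpha=a+b\gamma\neq0$. Since $A=B\cdot1+B\gamma$, we have $A\alpha=B\alpha+B\gamma\alpha$, and using the displayed formulas together with $\lambda|_B=\mu$ one computes, for all $r\in B$,
$$\lambda^\sharp(r\alpha)=\mu(r(2a-bc)),\qquad \lambda^\sharp(r\gamma\alpha)=\mu(-r(2bd+(a-bc)c)).$$
If $A\alpha\subseteq\ker\lambda^\sharp$, then the left ideals $B(2a-bc)$ and $B(2bd+(a-bc)c)$ lie in $\ker\mu$, so by left primitivity of $\mu$ both generators vanish: $2a=bc$ and $2bd+(a-bc)c=0$. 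The first relation turns the second into $2bd=ac$; then multiplying $bc=2a$ on the right by $2d$ gives $4ad=2bcd=(2bd)c=ac^2$, that is $a(c^2-4d)=0$, and a symmetric manipulation gives $b(c^2-4d)=0$. As $c^2-4d$ is a central unit, $a=b=0$, a contradiction. This proves (c).

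For (d): by the displayed formula $\Lambda(a+b\gamma)=2a-bc$, so the image of $\Lambda$ is the additive subgroup $2B+Bc$ of $B$, which is all of $B$ since $2B=B$ when $2\in U(B)$ and $Bc=B$ when $2\in J(B)$ (in which case $c\in U(B)$). The main obstacle is the computation in (c): one must see that left primitivity of $\mu$, applied to the two left ideals obtained from $\alpha$ and from $\gamma\alpha$, forces both coordinates of $\alpha$ to vanish, and that the hypotheses on $c,d$ are exactly what makes $c^2-4d$ a unit, so the resulting system has only the trivial solution. Everything else is routine once (a) and the explicit formula for $*$ from (b) are in hand.
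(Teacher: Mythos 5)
Your proof is correct and follows the same strategy as the paper: compute $\alpha+\alpha^*$ and $\gamma\alpha+(\gamma\alpha)^*$ in the $B$-coordinates, invoke left primitivity of $\mu$ twice to get the system $2a=bc$, $2bd=bc^2-ac$, and then argue that the hypotheses on $c,d$ force $a=b=0$. Where you differ is in the endgame: you observe up front that the discriminant $c^2-4d$ is a central unit under either hypothesis and derive, from the two relations, that $a(c^2-4d)=0=b(c^2-4d)$, disposing of both cases at once. The paper instead handles $2\in U(B)$ and $2\in J(B)$ by separate ad hoc manipulations (in the latter case reaching $(4d-c^2)b=0$, then $ca=0$). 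Your unification is a mild streamlining that also makes transparent \emph{why} the hypotheses are exactly what they are, a point worth keeping. Your remark that part~(b) already forces $B$ to be commutative (since an anti-automorphism of $A$ restricting to $1_B$ makes $B$ commute) is correct and does flag a small imprecision in the statement, whose ``$c,d$ central'' clause suggests noncommutative $B$ was meant to be allowed; both your computation of $\lambda^\sharp(e\alpha)$ and the paper's tacitly use commutativity of $B$, so the observation is apt.
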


\begin{proof} (a) and (b) are routinely verified. Let us confirm (c). Suppose $I$ is a left ideal of $A$ contained in the kernel of $\lm^{\sharp}$
and let $\al=a+b\ga$, where $a,b\in B$, be an arbitrary element of $I$. We wish to show that $\al=0$. Note that
\begin{equation}
\label{qq}
\al+\al^*=2a-cb\in B.
\end{equation}
For any $e\in B$, we have $e\al\in I$, so that
$$
0=\lm^{\sharp}(e\al)=\lm(e\al+(e\al)^*)=\lm (e(\al+\al^*))=\lm (e(2a-cb))=\mu (e(2a-cb)).
$$
Thus $B(2a-cb)\subseteq\ker\mu$, whence
\begin{equation}\label{2a}
2a=cb
\end{equation}
by the left primitivity of $\mu$. Now $-\ga\al\in I$, where
$$
-\ga\al=-(a\ga+b\ga^2)=-(a\ga+b(-c\ga-d))=db+(cb-a)\ga.
$$
We infer from (\ref{2a}) that
\begin{equation}\label{3a}
2db=c^2b-ca.
\end{equation}
Assume first that $2\in U(B)$. In this case $c=0$ and $d\in U(B)$, so (\ref{2a}) and (\ref{3a}) give $a=0$ and $b=0$, so $\al=0$.
Assume next $2\in J(B)$. In this case $c\in U(B)$. Substituting (\ref{2a}) in (\ref{3a}) gives
\begin{equation}\label{4a}
2db=2ca-ca=ca.
\end{equation}
Multiplying (\ref{4a}) through by 2 and using (\ref{2a}) once more yields
$$
4db=c^2b,
$$
so
$$
(4d-c^2)b=0.
$$
Since $2\in J(B)$ and $c\in U(B)$, it follows that $4d-c^2\in U(B)$, so $b=0$. Substituting this in (\ref{3a}) gives $ca=0$, whence $a=0$
and therefore $\al=0$. This proves (c), while (d) follows easily from (\ref{qq}) and the stated conditions on $c$.
\end{proof}

A special case of Lemma \ref{quad}, applied to find a formula for the Weil character of unitary groups over
finite, principal, local, commutative rings of odd characteristic, can be found in \cite{GS}.




\end{document}